\pdfoutput=1
\documentclass[11pt]{amsart}

\usepackage[utf8]{inputenc}
\usepackage[T1]{fontenc}
\usepackage[british]{babel}
\usepackage{stmaryrd}
\usepackage{url}
\usepackage{color}
\usepackage{graphicx}
\usepackage{mathrsfs}

\usepackage{mathtools}
\numberwithin{equation}{section}

\usepackage{microtype}
\mathtoolsset{centercolon}
\usepackage{enumitem}
\usepackage{amssymb}
\usepackage{bm}

\usepackage[dvipsnames]{xcolor}
\usepackage{hyperref}
\hypersetup{
	colorlinks,
	linkcolor={red!50!black},
	citecolor={blue!50!black},
	urlcolor={blue!80!black}
}
\usepackage{amsthm}
\usepackage[margin=0pt,small]{caption} 
\usepackage{geometry}
\usepackage{cleveref}

\setlist{itemsep=1pt,topsep=1pt,parsep=1pt}

\newcommand*{\EE}{\mathbb{E}}
\newcommand*{\PP}{\mathbb{P}}
\newcommand*{\bE}{\mathbf{E}}
\newcommand*{\bP}{\mathbf{P}}
\newcommand*{\ind}{\mathbf{1}}
\newcommand*{\numberset}{\mathbb}
\newcommand*{\N}{\numberset{N}}
\newcommand*{\Z}{\numberset{Z}}
\newcommand*{\R}{\numberset{R}}

\newcommand{\e}{\mathrm{e}}
\newcommand*{\tF}{\mathtt{F}}
\newcommand*{\tf}{\mathfrak{f}}
\newcommand*{\cons}{\mathrm{cons}}

\newcommand*{\Var}{\mathbb{V}\mathrm{ar}}
\newcommand*{\Cov}{\mathbb{C}\mathrm{ov}}
\newcommand*\dd{\mathop{}\!\mathrm{d}}

\newcommand{\gep}{\varepsilon}

\theoremstyle{plain}
\newtheorem{theorem}{Theorem}[section]

\newtheorem{lemma}[theorem]{Lemma}

\newtheorem{proposition}[theorem]{Proposition}

\newtheorem{theoremalpha}{Theorem}

\theoremstyle{definition}

\theoremstyle{remark}
\newtheorem{remark}[theorem]{Remark}

\title[Free energy of \(2d\) directed polymers]{Sharp behavior of the free energy for the two-dimensional directed polymer model}
\author{Quentin Berger}
\address{Université Sorbonne Paris Nord, Laboratoire d'Analyse, Géométrie et Applications, CNRS UMR 7539, 99 Av. J-B Clément, 93430 Villetaneuse, France and Institut Universitaire de France}
\email{quentin.berger@math.univ-paris13.fr}
\author{Shuta Nakajima} 
\address{Department of Mathematics, Keio University}
\email{njima@keio.jp}

\subjclass[2020]{Primary 60K35, 82B44; Secondary 82D60.}
\keywords{Directed Polymer, Disordered Systems, Percolation.}

\begin{document}

\begin{abstract}
  \noindent
  We consider the directed polymer model on \(\mathbb{Z}^d\), focusing on the critical dimension \(d=2\).
  Our main contribution is to give a sharp lower bound on the free energy in the high-temperature regime.
  Our proof uses a percolation argument inspired by \cite{Lac10a}, for which we introduce a key property of bounded ``\(\log\)-energy'': this property quantifies the regularity of the polymer measures at diffusive scales and we show that it propagates along open paths.
  Combined with Theorem~2.8 of~\cite{BCT25}, this gives upper and lower bounds on the free energy that are matching up to a multiplicative constant.
\end{abstract}

\maketitle

\section{Introduction and main results}

The directed polymer model in random environment is a prototypical disordered system, introduced by Huse and Henley~\cite{HH85} in 1985 in dimension \(1+1\), as an effective interface model for the Ising model with random impurities. 
It was then generalized to dimensions \(1+d\), \(d\geq 1\), in order to describe a stretched polymer in a heterogeneous solvent. 
The model, despite its relative simplicity, exhibits a very rich behavior and has attracted a lot of attention over the last 40 years, in particular due to its close connection to the stochastic heat equation and the KPZ equation.
We refer to \cite{Com17} for a comprehensive overview and to \cite{Zyg24} for an account of recent developments.

The present article studies the free energy of the model, which is a central physical quantity encoding its phase transition and is related to some localization properties. 
We focus here on the critical dimension \(d=2\), and we improve existing bounds (in chronological order: \cite{Lac10a,Nak14,BL17,BCT25}) to obtain the sharp behavior of the free energy in the high-temperature regime.

\subsection{Introduction of the model}

Let \((S_n)_{n\geq 0}\) be a simple random walk on \(\Z^d\), \(d\geq 1\), whose law we denote by \(\bP_x\) when the starting point is \(S_0=x\).
For simplicity, we also denote \(\bP=\bP_0\).
Let also \(\omega = (\omega_{n,x})_{n\geq 0,x\in \Z^d}\) be a field of i.i.d.\ random variables, whose law is denoted by~\(\PP\). We assume that the~\(\omega_{n,x}\) are centered and normalized and that they have a finite exponential moment:
\begin{equation}
       \label{hyp:omega}
       \EE[\omega_{1,0}] =0, \quad \Var(\omega_{1,0})=1\,,
       \quad \text{ and } \quad \lambda (\beta) := \log \EE[\e^{\beta\omega_{1,0}}] <+\infty \ \text{for } \beta \in (-\beta_0,\beta_0) \,,
\end{equation}
for some \(\beta_0>0\).

The random walk trajectory \((n,S_n)_{n\geq 0}\) represents the trajectory of a directed polymer, and the random environment \(\omega\) represents random impurities in the solvent.
For a fixed realization of the environment \(\omega\) (quenched disorder) and for an inverse temperature \(\beta\geq 0\), we define the following Gibbs measures, referred to as polymer measures: for \(N\in \N\),
\begin{equation*}
       \frac{\dd \bP_{N}^{\beta,\omega}}{\dd \bP}(S) = \frac{1}{\hat{Z}_{N}^{\beta,\omega}} \exp\Big( \beta\sum_{k=1}^{N}  \omega_{k,S_k}\Big) = \frac{1}{Z_{N}^{\beta,\omega}} \exp\Big( \sum_{k=1}^{N} \big( \beta \omega_{k,S_k} -\lambda(\beta) \big)\Big) \,,
\end{equation*}
where the last identity holds for \(\beta<\beta_0\).
Note that the two formulations are equivalent since the factor \(\e^{-\lambda(\beta)N}\) is absorbed in the partition functions which normalize~\(\bP_{N}^{\beta,\omega}\) to a probability measure: \(Z_{N}^{\beta,\omega} =\e^{-\lambda(\beta)N}\hat{Z}_{N}^{\beta,\omega}\). 
The only effect of this normalization is to restrict \(\beta\) to \([0,\beta_0)\), but this is not an issue since we only consider the high-temperature behavior.
In the following, we focus on the partition function \(Z_{N}^{\beta,\omega}\), which can be written as
\[
Z_N^{\beta,\omega} =\bE\Big[ \e^{\sum_{k=1}^{N}(\beta\omega_{k,S_k} - \lambda(\beta))} \Big] \,.
\]

\subsection{Free energy and phase transition}

An important physical quantity is the \emph{free energy} of the model, defined by the following limit, which exists \(\PP\)-a.s.\ and in \(L^q(\PP)\) for all \(q\geq 1\):
\begin{equation}
       \label{def:free-energy}
       \tf(\beta) = \lim_{N\to\infty} \frac1N \log Z_N^{\beta,\omega} = \lim_{N\to\infty} \frac1N \EE \big[\log Z_N^{\beta,\omega}\big] \,.
\end{equation}
We refer to \cite[Theorem~2.1]{Com17} for a proof.
(In practice, \cite{Com17} rather considers the \(\PP\)-a.s.\ limit \(p(\beta) = \lim_{N\to\infty} \frac1N \log \hat{Z}_{N}^{\beta,\omega}\), but we simply have \(\tf(\beta) = p(\beta)-\lambda(\beta)\).)

One can show that the free energy \(\tf(\beta)\) is a continuous non-increasing function of \(\beta\), see \cite[Theorem~2.3]{Com17}, so in particular, there exists a critical point
\[
\beta_c := \sup\{\beta,\; \tf(\beta) =0\}  \in [0,\infty] \,, \quad \text{ so that }\quad \tf(\beta)<0 \ \Leftrightarrow \ \beta>\beta_c \,.
\]
One can show that \(\beta_c<+\infty\) in generic cases, namely if \(\PP(\omega_{1,0}= \mathrm{ess}\sup \omega_{1,0}) <\frac{1}{2d}\), see \cite[Corollary~2.1]{Com17}.
Another natural question, related to the question of disorder relevance, is whether one has \(\beta_c=0\).
It has been shown that \(\beta_c=0\) in dimension \(d=1,2\), see~\cite{CV06,Lac10a} and that \(\beta_c >0\) in dimension \(d\geq 3\), see~\cite{Bol89}.

As far as the behavior of the free energy near the critical point~\(\beta_c\) is concerned, precise results have been obtained in dimension \(d=1\) in~\cite{AY15,Lac10a,Nak19} and in dimension \(d=2\) in~\cite{BCT25,BL17,Lac10a,Nak14}.
The case of dimension~\(d\geq 3\) has remained open until the recent breakthroughs~\cite{JL26,Lac25}.
Let us summarize the results in the following statement\footnote{In the case of dimension \(d=2\), we state~\cite{BL17} instead of the more recent~\cite{BCT25} to avoid introducing some notation.}.

\begin{theoremalpha}
       \label{thm:existing}
       We have the following critical behaviors for the free energy:
       \begin{enumerate}[label=(\roman*)]
              \item 
              In dimension \(d=1\), \cite{Nak19} shows
              \[
              \tf(\beta) = - (1+o(1)) \frac16 \, \beta^4 \quad \text{ as } \beta\downarrow 0 \,.
              \]

              \item \label{item-ii}
              In dimension \(d=2\), \cite{BL17} shows 
              \[
              \tf(\beta) =  -\, \exp\Big( - (1+o(1)) \frac{\pi}{\beta^2} \Big) \quad \text{ as } \beta\downarrow 0 \,.
              \]

              \item 
              In dimension \(d\geq 3\), \cite{JL26} shows (the upper bound holds for Gaussian disorder) 
              \[
               -\, \exp\Big(- u^{- \frac{2+d}{10 d} +o(1)} \Big) \leq \tf(\beta_c +u) \leq -\, \exp\Big(- u^{-1+o(1)} \Big) \quad \text{ as } u\downarrow 0 \,.
              \]
       \end{enumerate}
\end{theoremalpha}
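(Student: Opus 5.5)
The statement collects results from \cite{Nak19,BL17,JL26}, so the proof consists of invoking those references. Below I sketch how I would reprove item~\ref{item-ii}, the case relevant here; items (i) and (iii) I would simply quote.

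For item~\ref{item-ii}, the guiding quantity is the second moment. Writing \(S,S'\) for two independent walks, we have
\[
\EE\big[(Z_N^{\beta,\omega})^2\big] = \bE^{\otimes 2}\big[\e^{(\lambda(2\beta)-2\lambda(\beta)) L_N}\big], \qquad L_N=\sum_{k=1}^N \ind_{\{S_k=S'_k\}} .
\]
Here \(\lambda(2\beta)-2\lambda(\beta)\sim\beta^2\) and \(\bE^{\otimes 2}[L_N]\sim \frac{1}{\pi}\log N\). Since \(L_N/\bE[L_N]\) is asymptotically exponential (Erdős--Taylor), the second moment stays bounded exactly up to the scale \(\log N = (1-\varepsilon)\pi/\beta^2\). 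So the correlation length is \(\ell_\beta=\exp((1+o(1))\pi/\beta^2)\), and the claim is equivalent to \(\tf(\beta) = -\ell_\beta^{-1+o(1)}\).

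\emph{Lower bound.} I would use point-to-point partition functions \(Z_n(x,y)\), which are supermultiplicative:
\[
\log Z_{n+m}(0,z)\ge \log Z_n(0,x)+\log Z_m(x,z).
\]
This gives \(\tf(\beta)\ge \frac1\ell\,\EE\log Z_\ell(0,x_\ell)\) along a suitable sequence of endpoints. Take \(\ell=\exp((1-\varepsilon)\pi/\beta^2)\) and normalize by \(\bP(S_\ell=x)\).
\begin{enumerate}[label=(\alph*)]
\item The normalized point-to-point partition function is \(L^2\)-bounded, by a local-limit version of the computation above. Paley--Zygmund then gives \(\PP\big(\log Z_\ell(0,x)\ge \log \bP(S_\ell=x)-C\big)\ge c\).
\item A concentration inequality for \(\log Z\) (Gaussian or Talagrand-type, extended to exponential moments via \eqref{hyp:omega}) upgrades this to \(\EE\log Z_\ell(0,x)\ge -C\log \ell\).
\end{enumerate}
This yields \(\tf(\beta)\ge -C\log\ell/\ell = -\exp(-(1+o(1))\pi/\beta^2)\) after sending \(\varepsilon\to0\).

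\emph{Upper bound.} I would use the fractional moment method with coarse-graining at scale \(\ell=\exp((1+\varepsilon)\pi/\beta^2)\). By Jensen, \(\tf(\beta)\le \liminf \frac{1}{\theta N}\log \EE[(Z_N)^\theta]\) for \(\theta\in(0,1)\). The steps are:
\begin{enumerate}[label=(\alph*)]
\item Decompose \(Z_N\) according to the blocks of size \(\ell\times\sqrt\ell\) visited by the walk, and use \((\sum a_i)^\theta\le\sum a_i^\theta\).
\item Show that \(\EE[(Z_{\text{block}})^\theta]\) is small by a change of measure that lowers the environment on each visited block. The tilt is chosen as a quadratic (chaos of order two) functional of \(\omega\), which costs \(O(1)\) in relative entropy but cuts the expected partition function by a small factor. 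The scale \((1+\varepsilon)\pi/\beta^2\) is exactly where the variance of that quadratic form, of order \(\beta^2\log\ell/\pi\), exceeds \(1\).
\item Summing over block paths then gives \(\EE[(Z_{k\ell})^\theta]\le \e^{-ck}\), hence \(\tf(\beta)\le -c/\ell\).
\end{enumerate}

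The main obstacle is the upper bound, specifically tuning the change of measure so that it is effective just above the critical scale. The lower bound in this \(\exp(-(1+o(1))\cdot)\) precision is comparatively routine. Improving it to the constant-factor precision this paper targets would instead require avoiding the \(\log \ell\) loss from concentration, for instance through a percolation argument on blocks.
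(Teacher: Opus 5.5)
Citing \cite{Nak19}, \cite{BL17} and \cite{JL26} is exactly what the paper does. Theorem~\ref{thm:existing} is a summary of the literature and is given no proof in the paper, so as a justification of the statement your proposal is complete.

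Your optional reproof of item~\ref{item-ii} should not be relied on as written, mainly because of the upper bound, where there is a genuine gap. Take a second-chaos tilt \(X=\sum V(m-n,y-x)\,\omega_{n,x}\omega_{m,y}\) over a block with \(\asymp\ell^2\) space-time sites. Under \(\PP\) one has \(\Var(X)\asymp\ell^2\sum_{k\le\ell,z}V(k,z)^2\). Shifting the environment along a polymer path moves the mean of \(X\) by about \(\beta^2\ell\sum_{k,z}p_k(z)V(k,z)\). By Cauchy--Schwarz the signal-to-noise ratio is then at most of order \(\beta^2(\sum_{k\le\ell}p_{2k}(0))^{1/2}\asymp\beta^2\sqrt{\log\ell}\). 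So the tilt only becomes effective once \(\log\ell\gg\beta^{-4}\), and a quadratic change of measure gives \(\tf(\beta)\le-\exp(-c\beta^{-4})\) (cf.\ \cite{Lac10a}), not the exponent \((1+\gep)\pi/\beta^2\). The threshold \(\beta^2\log\ell\approx\pi\) you invoke belongs to the full second moment \(\EE[Z_\ell^2]\), i.e.\ to the pinning model of Section~\ref{sec:Lyapunov}, to which chaoses of all orders contribute. The same heuristic shows that a degree-\(k\) multilinear tilt only works when \(\log\ell\gg\beta^{-2-2/(k-1)}\). Reaching the sharp constant therefore needs functionals whose degree tends to infinity as \(\beta\downarrow0\), and this is where the real work of \cite{BL17} lies.

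Step (b) of your lower bound is also too quick. The standard concentration inequality for \(\log Z_\ell(0,x)\) has Lipschitz constant \(\beta\) times the square root of the replica overlap of the point-to-point polymer. Deterministically this is only bounded by \(\beta\sqrt\ell\), which gives \(\EE\log Z_\ell(0,x)\ge\log\bP(S_\ell=x)-C\beta\sqrt\ell\) and hence only the exponent \(\pi/(2\beta^2)\). To lose only \(O(1)\) you need a convexity argument combined with concentration of the distance to a good event (Gaussian, or Talagrand's convex-distance inequality). On that event, which has positive probability, one has \(Z_\ell(0,x)\ge c\,\bP(S_\ell=x)\) and \(\beta^2\) times the overlap bounded; both follow from your second-moment bound since \(\beta^2\log\ell<\pi\).

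Finally, the remaining \(\log\ell\) loss comes from pinning the endpoint, \(\log\bP(S_\ell=x)\approx-\log\ell\), not from concentration. Avoiding point-to-point quantities is precisely what the bounded \(\log\)-energy property of Proposition~\ref{prop:logregularity-propagation} achieves in the paper.
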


The free energy is also intimately related to localization properties of the polymer measure: in particular,  \(\beta_c\) marks a localization transition for the directed polymer.
Indeed, one has that trajectories are diffusive at weak disorder and satisfy an invariance principle, see \cite{CY06} or \cite{Lac25-ecp} for a short proof (note that weak disorder holds for any \(\beta \in [0,\beta_c]\) thanks to~\cite{JL24a}). 
Additionally, endpoint localization has been proven when \(\beta>\beta_c\), see \cite{CSY03} or \cite{BC20} for the most advanced results.

In fact, the free energy gives some quantitative estimates on localization properties, in terms of the end-point or replica overlap, see \cite{CH02,CH06,CSY03}. 
To state two results, \cite{CH06} and \cite{CH02} respectively show that\footnote{The article~\cite{CH06} actually considers a continuous-time version of the model.}, for a Gaussian environment \((\omega_{1,0}\sim \mathcal{N}(0,1))\),  $\PP\text{-a.s.}$
\begin{align*}
- \tf(\beta)
&=
\lim_{N\to\infty}\frac1N\sum_{k=1}^N
(\bP_{k-1}^{\beta,\omega})^{\otimes 2}
\left(S_k^{(1)}=S_k^{(2)}\right), 
\\ 
-\tf'(\beta)& = \beta \lim_{N\to\infty}
\EE\left[(\bP_N^{\beta,\omega})^{\otimes 2}\left(\frac1N\sum_{k=1}^N \ind_{\{S_k^{(1)}=S_k^{(2)}\}}\right)\right],
\end{align*}
where \((\bP_N^{\beta,\omega})^{\otimes 2}\) denotes the law of two independent trajectories \((S_n^{(1)})_{n\geq 0}\), \((S_n^{(2)})_{n\geq 0}\) drawn under the polymer measure (with a fixed, common environment).
The second identity holds if \(\tf\) is differentiable at \(\beta>0\), which is the case for all but at most countably many \(\beta\), by convexity of \(\beta\mapsto p(\beta) = \tf(\beta) +\lambda(\beta)\); if \(\tf\) is not differentiable, one can bound the \(\liminf\) and \(\limsup\) by the left and right derivatives.

In conclusion, the free energy \(\tf(\beta)\) quantifies how much trajectories are localized, and understanding its critical behavior gives some important information on the phase transition.

\subsection{Main result}

Before we state our result, let us introduce some notation.
For any \(\beta \in [0,\beta_0/2)\) (recall $\beta_0$ from \eqref{hyp:omega}), define
\[
\sigma^2(\beta) =\Var\Big(\e^{\beta \omega_{1,0} -\lambda(\beta)}\Big) = \e^{\lambda(2\beta) -2\lambda(\beta)}-1 \,.
\]
Let us stress right away that we have \(\sigma^2(\beta)\sim \beta^2\) as \(\beta\downarrow 0\), but one may have lower order corrections, see~\eqref{eq:expand-sigma} below.
Our main result gives up-to-constant bounds on the free energy of the two-dimensional directed polymer in the high-temperature limit, expressed in terms of \(\sigma^{2}(\beta)\).
In fact, our contribution is to obtain the lower bound in the following theorem: the upper bound is given in \cite[Theorem~2.8]{BCT25}.

\begin{theorem}
       \label{thm:fe}
       For the two-dimensional directed polymer model, there are two positive constants \(c,c'\) such that, for any \(\beta\in (0,\beta_0/4)\),
       \[
        -c\, \e^{- \frac{\pi}{\sigma^2(\beta)}} \leq \tf(\beta) \leq - c'\, \e^{- \frac{\pi}{\sigma^2(\beta)}} \,.
       \]
\end{theorem}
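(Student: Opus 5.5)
The upper bound \(\tf(\beta)\le -c'\e^{-\pi/\sigma^2(\beta)}\) is exactly \cite[Theorem~2.8]{BCT25}, so I only need the lower bound \(\tf(\beta)\geq -c\,\e^{-\pi/\sigma^2(\beta)}\).

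The plan is a coarse-graining and percolation argument in the spirit of \cite{Lac10a}. The relevant scale is the correlation length
\[
L=L(\beta)=\e^{\pi/\sigma^2(\beta)}.
\]
It is set by the two-dimensional replica overlap: the expected overlap up to time \(n\) is \(\sum_{k\le n}\bP^{\otimes 2}(S_k^{(1)}=S_k^{(2)})\sim \frac{1}{\pi}\log n\), so the second moment \(\EE[(Z_n^{\beta,\omega})^2]\) stays bounded as long as \(\sigma^2(\beta)\frac{1}{\pi}\log n\le 1-\delta\). I will aim to show that \(\EE\log Z_N\geq -C N/L\) for \(N\) a large multiple of \(L\). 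Dividing by \(N\) and letting \(N\to\infty\) then gives \(\tf(\beta)\ge -C/L\).

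First, I cut time into blocks of length \(L\) and space into boxes of side \(\sqrt L\), giving a coarse-grained oriented lattice. A block \((i,y)\) is declared \emph{good} if the restricted partition functions over the block, started from any point of the box \(y\sqrt L\) (or more robustly from a regular initial measure on it), are bounded below by a constant \(c_0>0\) and spread mass diffusively onto the neighbouring boxes.

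Second, I show that, conditionally on the past, a block is good with probability at least \(1-\eta\), with \(\eta\) small, uniformly in \(\beta\) small. Since \(L^{\cdot}\) is chosen slightly below the \(L^2\)-critical scale (say time length \(L^{1-\delta}\) or \(\varepsilon L\)), second-moment and Paley--Zygmund estimates give \(Z\ge c_0\) with high probability. Good blocks then form a supercritical oriented percolation, which yields an open path of length \(N/L\) with positive density.

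Third, I propagate along the open path, giving
\[
\log Z_N\geq \sum_{\text{blocks}}\log(\text{block contribution})\ge -C\cdot\#\text{blocks}.
\]
The contribution of the bad events has to be controlled in expectation via a lower concentration bound on \(\log Z\), e.g.\ a Talagrand/Carmona--Hu type Gaussian-concentration estimate giving \(\EE[(\log Z)_-]\) of order block-size-independent constants. This is where the precise constant \(\pi\) in \(L\) matters, and why no \(o(1)\) loss is allowed in the exponent.

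The main obstacle is the propagation step. The polymer measure entering a new block is not a point mass; it is some random measure on the box. The block estimates must hold uniformly over all such entrance measures, yet an adversarially concentrated entrance measure would have a large overlap and break the second-moment bound.

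This is where I would introduce the regularity property suggested in the abstract, a bounded "\(\log\)-energy" of the entrance measure \(\mu\) at diffusive scale. Concretely, I would require
\[
\sum_{x,x'}\mu(x)\mu(x')\log\!\Big(\frac{\sqrt L}{1+|x-x'|}\Big)\le K.
\]
This controls the replica overlap when starting from \(\mu\), because the 2d overlap started from points at distance \(r\) has mean about \(\frac1\pi\log(\sqrt L\cdot\sqrt L/r^{2})\). The key lemma is then: if \(\mu\) has \(\log\)-energy at most \(K\) and the block is good, the outgoing normalized polymer measure again has \(\log\)-energy at most \(K\). I would try to prove it by bounding the outgoing energy through the second moment of the point-to-point partition functions, using the local limit theorem to show that diffusive spreading over time \(\varepsilon L\) smooths the measure at scale \(\sqrt L\). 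This is the step where I expect the real work to be.
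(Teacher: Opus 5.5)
Your architecture is the paper's: Lacoin-type coarse-grained oriented percolation, with a bounded $\log$-energy of the entrance measure carried along open paths. But the proposal stops exactly where the proof has content, and the ingredients you do supply do not work at the sharp scale. A block length $L^{1-\delta}$, where $\sigma^2(\beta)R_n\le 1-\delta$ and second moments are harmless, only yields $\tf(\beta)\ge -C\,\e^{-(1-\delta)\pi/\sigma^2(\beta)}$. That is the bound with a loss in the exponent, which is what you want to remove.

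You must therefore take $N=\gep L$, i.e.\ $\sigma^2(\beta)R_N=1+\vartheta_0/\log N$ with $\vartheta_0$ very negative: the critical window. There the second moment started from a point grows like $\log N$. Controlling the mean overlap is not enough, because the higher chaoses resum to a renewal mass $\sigma^2(\beta)\sum_{n\le N}U_{N,\lambda}(n)$ of order $\mathcal{V}(\e^{\vartheta_0})\asymp 1/|\vartheta_0|$, not of order $\sigma^2(\beta)$. The missing ingredient is the factorized point-to-point bound
\[
\Cov\bigl(Z_N^{\beta}(u,z),Z_N^{\beta}(v,w)\bigr)\le \frac{C\,\mathcal{V}(\e^{\vartheta_0})}{N^2}\,K_N(v-u)\,K_N(w-z).
\]
The paper gets this from the chaos expansion. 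It splits according to which of the three time segments (before the first, between the first and last, after the last chaos point) has length at least $N/3$, and uses the sharp critical-window renewal estimates of \cite{CSZ19b}. The bound's two features---factorization over starting and ending points, and a prefactor vanishing as $\vartheta_0\to-\infty$---are exactly what the argument needs. Nothing in your plan produces them.

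Second, your lemma ``energy $\le K$ in, good block, energy $\le K$ out'' needs a mechanism that reproduces the \emph{same} threshold with high probability, uniformly over entrance measures. A crude second-moment bound gives an outgoing energy $\lesssim C(1+K)$, which inflates along the path. In the paper, on $\{Z_N^{\beta,\cons}(\mu,\Lambda_N^a)\ge c_\star/2\}$ the outgoing energy is at most $4\mathcal{R}/c_\star^2$. By the bound above together with the local CLT,
\[
\EE[\mathcal{R}]\le C^2H_1+C\,\mathcal{V}(\e^{\vartheta_0})\,\mathcal{E}_{K_N}(\mu)\,H_2 .
\]
Here the mean term does not depend on $\mathcal{E}_{K_N}(\mu)$, and the fluctuation term is a small multiple of it. Markov's inequality then closes the loop once $A$ is chosen large first and $\vartheta_0$ negative afterwards. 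The same covariance bound makes $\Var(Z_N^{\beta,\cons}(\mu,\Lambda_N^a))$ small, so Chebyshev gives the lower bound on the partition function. Your LCLT-smoothing idea covers only the mean term.

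Lesser points:
\begin{itemize}
\item The concentration step is unnecessary. Since $\frac1N\log Z_N\to\tf(\beta)$ a.s.\ with a deterministic limit, having $Z_{mN}\ge\eta_N(c_\star/2)^{m-1}$ on an event of positive probability suffices. That step is also not where $\pi$ enters.
\item You must treat the first block, which starts from the point mass at $0$ with energy $K_N(0)=1+\log N$.
\item You need a deterministic predecessor rule, because a vertex can be reached by open paths carrying different entrance measures.
\item You should use space-constrained partition functions, so that openness is finite-range dependent and Liggett--Schonmann--Stacey applies.
\end{itemize}
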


Since \(\lambda(\beta) = \sum_{n=1}^{\infty} \frac{\kappa_n}{n!} \beta^n\) with \(\kappa_n\) the \(n\)-th cumulant of \(\omega_{1,0}\), we get that 
\begin{equation}
       \label{eq:expand-sigma}
       \sigma^2(\beta) = \beta^2 + \kappa_3 \beta^3 + \frac{7\kappa_4+6}{12} \beta^4 +O(\beta^5) \quad \text{ as } \beta\downarrow 0 \,. 
\end{equation}
Notice that \(\kappa_1=0\), \(\kappa_2=1\) by~\eqref{hyp:omega}, and that \(\kappa_3= \EE[\omega_{1,0}^3]\), \(\kappa_4 = \EE[\omega_{1,0}^4]-3\).
Thus, Theorem~\ref{thm:fe} shows that
\[
-\tf(\beta)  \asymp \e^{-\frac{\pi}{\beta^2} + \frac{\pi}{\beta} \kappa_3}  \quad \text{ as } \beta\downarrow 0 \,,
\]
where \(f(\beta) \asymp g(\beta)\) means that the ratio \(f(\beta)/g(\beta)\) remains bounded away from \(0\) and \(\infty\).

Notice that Theorem~\ref{thm:fe} thus improves Theorem~\ref{thm:existing}-\ref{item-ii}, \textit{i.e.}\ \cite{BL17}, since it removes the \(o(1)\) in the exponential, replacing \(\pi/\beta^2\) by \(\pi/\sigma^2(\beta)\) (which may contain lower order corrections as seen in~\eqref{eq:expand-sigma}).
The upper bound in Theorem~\ref{thm:fe} is proven in \cite[Theorem~2.8]{BCT25}, but our lower bound improves that of \cite[Theorem~2.8]{BCT25} since it differed from the correct asymptotic by some polynomial factor \(\beta^{-8}\).

\begin{remark}
       \label{rem:critical-window}
       Since \(\tf(\beta)\) is the exponential decay rate of the partition function, it is natural to define some \emph{intermediate disorder regime} by letting \(\beta_N\downarrow 0\) in such a way that \(-N\tf(\beta_N) \asymp 1\).
       In other words, one can interpret~\(-1/\tf\) as a correlation length.
       By Theorem~\ref{thm:fe}, the regime \(-N\tf(\beta_N) \asymp 1\) corresponds to \( \sigma^2 (\beta_N) \frac{\log N}{\pi}= 1+ \frac{O(1)}{\log N}\).
       This is exactly the critical window considered to obtain a non-trivial limit of the model, namely the Stochastic Heat Flow of~\cite{CSZ23}. 
       We refer to~\cite{CSZ24-rev} for an introduction and to~\cite{CSZ25} for a recent overview of the literature.
\end{remark}

\subsection{Second moment and Lyapunov exponents}

Let us now give some complementary results. 
First, let us introduce the following Lyapunov exponent:
\[
\tF_2(\beta) = \lim_{N\to\infty} \frac{1}{N} \log \EE\big[(Z_N^{\beta,\omega})^2 \big] \,,
\]
where the existence of the limit is guaranteed by the sub-additivity of \(\log \EE[(Z_N^{\beta,\omega})^2]\), see Section~\ref{sec:Lyapunov}.
In fact, by using a replica trick, since \(\EE\big[\e^{\beta \omega_{n,x} + \beta \omega_{n,y} -2\lambda(\beta)}\big] = \e^{(\lambda(2\beta)-2\lambda(\beta)) \ind_{\{x=y\}}}\) we get that
\[
\EE\big[ (Z_N^{\beta,\omega})^2\big] = \bE^{\otimes 2} \Big[ \e^{ (\lambda(2\beta)-2\lambda(\beta)) \sum_{n=1}^N \ind_{\{S_{n}^{(1)} =S_n^{(2)}\}} }\Big] = \bE \Big[ \e^{ (\lambda(2\beta)-2\lambda(\beta)) \sum_{n=1}^N \ind_{\{S_{2n}=0\}}}\Big] \,.
\]
The last equality above holds by symmetry of the simple random walk.
This is exactly the partition function of a homogeneous pinning model with underlying renewal process \(\tau\) given by the return times of \((S_{2n})_{n\geq 0}\) to \(0\), as studied in \cite[Chap.~2]{Gia07}.
Thus, the Lyapunov exponent \(\tF_2(\beta)\) is the free energy of this pinning model.
Since the renewal process satisfies \(\bP(\tau_1=n) \sim \frac{\pi}{n (\log n)^2}\), see \cite{JP72}, \cite[Theorem~2.1]{Gia07} states that \(\tF_2(\beta)\) vanishes faster than any power.
The following result gives the exact asymptotic behavior of \(\tF_2(\beta)\) in the high-temperature regime; in fact, we provide an asymptotic expansion of the ``correlation length'' \(1/\tF_2(\beta)\). This result follows from classical arguments in \cite[Chap.~2]{Gia07}, but it does not seem to appear in the literature and should serve as a comparison point for Theorem~\ref{thm:fe}.

\begin{theorem}
       \label{thm:second-moment}
       We have the following expansion of the correlation length \(1/\tF_2(\beta)\) in terms of the parameter \(\alpha(\beta) := \pi \frac{1+\sigma^2(\beta)}{\sigma^2(\beta)}\):
       \begin{equation*}
              {\tF_2(\beta)} = {16}\, \e^{-\alpha(\beta)} \Big(1 - 4 \alpha(\beta) \e^{-\alpha(\beta)} + O\big( \alpha(\beta)^2 \e^{-2\alpha(\beta)} \big) \Big)^{-1} \qquad \text{ as } \beta \downarrow 0\,.
       \end{equation*}
       As a consequence, we have \(\tF_2(\beta)  \sim 16\, \e^{-\pi}\, \e^{-\frac{\pi}{\sigma^{2}(\beta)}}\) as \(\beta \downarrow 0\).
\end{theorem}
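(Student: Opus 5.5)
The plan is to reduce the statement to an explicit equation for \(\tF_2(\beta)\), using the renewal structure of the pinning model, and then to solve that equation asymptotically using the known generating function of the return probabilities of the simple random walk on \(\Z^2\).

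\emph{Step 1: characterise \(\tF_2\).} By the replica identity above,
\[
\EE[(Z_N^{\beta,\omega})^2]=\bE\Big[(1+\sigma^2(\beta))^{\sum_{n=1}^N\ind_{\{S_{2n}=0\}}}\Big].
\]
This is a homogeneous pinning partition function with reward \(e^{h}=1+\sigma^2(\beta)\) and renewal \(\tau\), the return times of \((S_{2n})_{n\ge0}\) to \(0\); write \(K(n)=\bP(\tau_1=n)\). The walk is recurrent in dimension two, so \(\sum_n K(n)=1\). The classical result of \cite[Chap.~2]{Gia07} then says that for \(h>0\) the free energy \(F=\tF_2(\beta)>0\) is the unique solution of
\[
\sum_{n\ge1}K(n)\,\e^{-Fn}=\frac{1}{1+\sigma^2(\beta)}.
\]
I would rederive this quickly. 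Decompose over the last renewal before \(N\), and observe that \(n\mapsto (1+\sigma^2)K(n)\e^{-Fn}\) is a probability distribution with exponentially decaying tails. This step is routine.

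\emph{Step 2: rewrite via the Green function.} Set
\[
G(s):=\sum_{n\ge0}\bP(S_{2n}=0)\,s^n .
\]
The renewal equation gives \(G(s)=1/(1-\hat K(s))\). The equation of Step~1 therefore becomes
\[
G(\e^{-F})=\frac{1+\sigma^2}{\sigma^2}=\frac{\alpha(\beta)}{\pi}.
\]
For the two-dimensional simple random walk we have the exact formula \(\bP(S_{2n}=0)=\binom{2n}{n}^2 16^{-n}\). Hence
\[
G(s)=\frac{2}{\pi}\,\mathbf K(\sqrt s),
\]
where \(\mathbf K\) is the complete elliptic integral of the first kind.

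\emph{Step 3: expand near \(s=1\) and invert.} Use the classical expansion of \(\mathbf K(k)\) as \(k'=\sqrt{1-k^2}\to0\):
\[
\mathbf K=\log\tfrac{4}{k'}+\tfrac{k'^2}{4}\Big(\log\tfrac{4}{k'}-1\Big)+O\big(k'^4\log\tfrac1{k'}\big).
\]
With \(x:=1-\e^{-F}\) this gives
\[
\alpha=\log\frac{16}{x}+\frac{x}{4}\Big(\log\frac{16}{x}-2\Big)+O\big(x^2\log\tfrac1x\big).
\]
Since \(\alpha\to\infty\) as \(\beta\downarrow0\), we have \(x\to0\). Inverting,
\[
x=16\,\e^{-\alpha}\Big(1+\tfrac{x}{4}(\alpha-2)+O(\alpha^2x^2)\Big).
\]
Substituting \(x\approx16\e^{-\alpha}\) in the correction gives \(x=16\e^{-\alpha}\big(1+4\alpha\e^{-\alpha}-8\e^{-\alpha}+O(\alpha^2\e^{-2\alpha})\big)\). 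Finally \(F=-\log(1-x)=x+x^2/2+O(x^3)\), and the \(x^2/2=128\e^{-2\alpha}\) term exactly cancels the \(-8\e^{-\alpha}\) correction (\(16\e^{-\alpha}\cdot 8\e^{-\alpha}=128\e^{-2\alpha}\)). This yields
\[
F=16\,\e^{-\alpha}\big(1+4\alpha \e^{-\alpha}+O(\alpha^2\e^{-2\alpha})\big),
\]
which is equivalent to the stated inverse form. Since \(\alpha=\pi+\pi/\sigma^2\), the corollary \(\tF_2\sim16\,\e^{-\pi}\e^{-\pi/\sigma^2}\) follows immediately.

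\emph{Main obstacle.} The main difficulty is the precise second-order expansion of \(G\) near \(s=1\), including the constant \(\log 16\) and the \(x\log x\) term. If I prefer not to rely on elliptic-integral asymptotics, I would obtain it directly from \(\bP(S_{2n}=0)=\frac{1}{\pi n}\big(1-\frac{1}{4n}+O(n^{-2})\big)\) by comparing \(\sum_n s^n/(\pi n)=-\frac1\pi\log(1-s)\). The constant term, however, requires the exact sum \(\sum_n\big(\bP(S_{2n}=0)-\frac1{\pi n}\big)\), so the closed form via \(\mathbf K\) is the cleanest route. The bookkeeping of error terms in the inversion is routine but must track the \(\alpha\) factors carefully.
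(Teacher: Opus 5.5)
Your proposal is correct and follows the paper's proof essentially step by step: the pinning-model characterization of \(\tF_2\), the renewal identity giving \(\sum_{n\ge0}\e^{-\tF_2 n}\bP(S_{2n}=0)=(1+\sigma^2)/\sigma^2\), the closed form via the complete elliptic integral and its expansion near the singularity, and then inversion. The only cosmetic difference is the order of operations. You invert in the variable \(x=1-\e^{-F}\) and then pass to \(F\), where the \(x^2/2\) term cancels your \(-8\e^{-\alpha}\) correction. The paper instead first rewrites the expansion in terms of \(f\), where the same cancellation removes the \(-2\), and then inverts directly.
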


Note that one can again use the expansion~\eqref{eq:expand-sigma} of \(\sigma^2(\beta)\) to obtain a sharp asymptotic of \(\tF_2(\beta)\) in terms of \(\kappa_3,\kappa_4\).
After some calculations, one gets \(\tF_2(\beta)  \sim 16\, \e^{\pi(\frac{7}{12}\kappa_4-\kappa_3^2 - \frac{1}{2})}\, \e^{-\frac{\pi}{\beta^2} + \frac{\pi}{\beta} \kappa_3} \) as \(\beta \downarrow 0\).

\begin{remark}
       Altogether, combining Theorems~\ref{thm:fe} and \ref{thm:second-moment}, we obtain that
       \[
       -\tf(\beta) \asymp \tF_2(\beta) \quad \text{ as } \beta\downarrow 0\,.
       \]
       In other words, the second moment provides a sharp estimate for the decay rate of the (quenched) partition function.
       In particular, in view of Remark~\ref{rem:critical-window}, the critical window \( \sigma^2 (\beta_N) \frac{\log N}{\pi}= 1+ \frac{O(1)}{\log N}\) for the Stochastic Heat Flow \cite{CSZ23} can also be interpreted as some intermediate disorder regime \(N\tF_2(\beta_N) \asymp 1\) for the second moment. 
       We stress that a similar picture also holds in dimension \(d=1\), where one has \(-\tf(\beta) \asymp \beta^4 \asymp \tF_2(\beta)\) as \(\beta\downarrow 0\), and where the intermediate disorder regime is given by \(N \beta_N^4 \asymp 1\), see \cite{AKQ14a}.
\end{remark}

More generally, one can define the Lyapunov exponent of order \(p>0\) and $\beta \in (0,\beta_0/\max\{1,p\})$, by letting
\[
\tF_p(\beta) = \lim_{N\to\infty} \frac{1}{N} \log \EE\big[(Z_N^{\beta,\omega})^p \big] \,.
\]
The existence of the limit follows by super-additivity if \(p\in (0,1)\) and by sub-additivity if \(p\geq 1\), see Section~\ref{sec:Lyapunov} for details.
We then have the following results, as easy consequences of the above.

\begin{theorem}
       \label{thm:Lyapunov}
       There are universal constants \(c,c'>0\) such that, as \(\beta \downarrow 0\), we have:
       \begin{enumerate}[label=(\roman*)]
              \item \label{i-p-01}
              If \(p\in (0,1)\),
                     \[
                     - c\, p\wedge (1-p)\, \e^{-\frac{\pi}{\sigma^2(\beta)}}  \leq \tF_p(\beta) \leq - c'\, p\wedge (1-p)\, \e^{-\frac{\pi}{\sigma^2(\beta)}} \,.
                     \]
              \item \label{ii-p>1}
              If \(p >1\),
                     \[
                     c'\, (p-1)\, \e^{-\frac{\pi}{\sigma^2(\beta)}}  \leq \tF_p(\beta) \leq  (p-1)\, \tF_{\lceil p \rceil}(\beta) \,.
                     \]
       \end{enumerate}
\end{theorem}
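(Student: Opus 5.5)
The plan is to exploit convexity of \(p\mapsto \phi_N(p):=\frac1N\log\EE[(Z_N^{\beta,\omega})^p]\), which follows from Hölder's inequality, and then pass to the limit. The limit \(p\mapsto \tF_p(\beta)\) is then convex on its domain and satisfies \(\tF_0=\tF_1=0\), since \(\EE[Z_N]=1\). A second ingredient is Jensen's inequality in the form \(\EE[Z_N^p]=\EE[\e^{p\log Z_N}]\ge \e^{p\EE[\log Z_N]}\), which gives \(\tF_p(\beta)\ge p\,\tf(\beta)\) for every \(p>0\). The proof of each item combines these two facts with Theorem~\ref{thm:fe}, and with Theorem~\ref{thm:second-moment} for the upper bound in~\ref{ii-p>1}.

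\emph{Item \ref{i-p-01}.} For the lower bound, Jensen gives \(\tF_p\ge p\,\tf\). Convexity in \(p\) together with \(\tF_1=0\) would bound \(\tF_p\) from above, not below, so for the factor \(1-p\) I will instead use Hölder. Write \(Z=Z^{p}\cdot Z^{1-p}\) and apply Hölder with exponents \(\frac{1}{1-q}\) and \(\frac1q\) to get \(1=\EE[Z]\le \EE[Z^p]^{1-q}\,\EE[Z^{\cdot}]^{q}\). A cleaner route is log-convexity of \(p\mapsto\EE[Z^p]\) on \([0,2]\): it gives \(\tF_p\ge -\frac{1-p}{p'-1}\tF_{p'}\)-type bounds only in the wrong direction. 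So I will use the following instead. By Jensen applied to the tilted measure \(\widetilde\PP=Z_N\,\dd\PP\), we have \(\EE[Z^p]=\widetilde\EE[Z^{p-1}]\ge \e^{-(1-p)\widetilde\EE[\log Z_N]}\). The tilted quantity \(\widetilde\EE[\log Z_N]\) is the size-biased (planted) free energy, and I expect its growth rate to be bounded by \(\tF_2\) through a Jensen bound \(\widetilde\EE[\log Z]\le\log\widetilde\EE[Z]=\log\EE[Z^2]\). This yields \(\tF_p\ge-(1-p)\tF_2\asymp -(1-p)\e^{-\pi/\sigma^2}\) by Theorem~\ref{thm:second-moment}. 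Combining the two lower bounds gives \(\tF_p\ge -c\,(p\wedge(1-p))\e^{-\pi/\sigma^2}\). For the upper bound, \(p\mapsto \tF_p\) is convex with \(\tF_0=0\). On \([p,1]\), convexity gives \(\tF_{1/2}\le\) the chord value, and between \(0\) and \(1/2\), or between \(1/2\) and \(1\), it gives \(\tF_p\le 2(p\wedge(1-p))\tF_{1/2}\). It therefore remains to show \(\tF_{1/2}\le -c'\e^{-\pi/\sigma^2}\). Here I do not expect to have an easy argument unless the upper bound of Theorem~\ref{thm:fe} in \cite[Theorem~2.8]{BCT25} is in fact proved via a fractional moment \(\EE[Z^{\theta}]\), as is standard. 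If so, I would re-use that bound. Otherwise, I would use the concentration of \(\log Z_N\) around \(N\tf\), which has fluctuations \(o(N)\), to get \(\tF_{1/2}\le \frac12\tf\) up to a large-deviation correction, and this is the main obstacle.

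\emph{Item \ref{ii-p>1}.} The lower bound is \(\tF_p\ge (p-1)\,\frac{\dd}{\dd p}\tF_p|_{p=1^+}\) by convexity, or more simply it follows by convexity through the points \(1/2\) and \(1\): \(\tF_p\ge \tF_1+(p-1)\frac{\tF_1-\tF_{1/2}}{1/2}=-2(p-1)\tF_{1/2}\ge 2c'(p-1)\e^{-\pi/\sigma^2}\), using the upper bound from~\ref{i-p-01}. For the upper bound, convexity on \([1,\lceil p\rceil]\) gives \(\tF_p\le\frac{p-1}{\lceil p\rceil-1}\tF_{\lceil p\rceil}\le (p-1)\tF_{\lceil p\rceil}\), because \(\tF_{\lceil p\rceil}\ge0\) by Jensen (\(\EE[Z^k]\ge 1\)). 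Thus everything reduces to convexity in \(p\) plus the fractional-moment upper bound \(\tF_{1/2}\lesssim-\e^{-\pi/\sigma^2}\), and proving that last bound is the crux.
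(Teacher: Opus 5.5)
\textbf{The missing estimate.} The genuine gap is the bound \(\tF_{1/2}(\beta)\le -c'\e^{-\pi/\sigma^2(\beta)}\). You correctly reduce to it, but you never prove it. Both the upper bound in~(i) and the lower bound in~(ii) rest on it.

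Your fallback cannot work. Jensen gives \(\tF_{1/2}\ge\frac12\tf\), which is the wrong direction. The upper bound \(\tf\le -c'\e^{-\pi/\sigma^2}\) says nothing about \(\EE[Z_N^{1/2}]\). That quantity is governed by upper-tail large deviations of \(\log Z_N\) at linear scale, not by its \(o(N)\) typical fluctuations. Quantitatively, take a Gaussian environment, where \(\log Z_N\) is \(\beta\sqrt N\)-Lipschitz in \(\omega\). Gaussian concentration then only gives \(\tF_{1/2}\le\frac12\tf+\beta^2/8\). Since \(\beta^2\gg\e^{-\pi/\sigma^2(\beta)}\), the correction swamps the main term and the bound is not even negative.

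\textbf{What the paper uses.} The missing ingredient is an external estimate on a truncated moment with the sharp rate. The paper takes \cite[Theorem~2.2]{BCT25}, namely \(\sup_{N\ge1}\frac1N\log\EE[Z_N^{\beta,\omega}\wedge1]\le-c_2\e^{-\pi/\sigma^2(\beta)}\). It combines this with the elementary inequality \(\EE[Z^p]\le 2^p\,\EE[Z\wedge1]^{p\wedge(1-p)}\), valid for \(Z\ge0\) with \(\EE[Z]=1\) \cite[Lem.~3.3]{BCT25}. This yields the upper bound in~(i) for every \(p\in(0,1)\) directly.

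In your scheme you only need it at \(p=1/2\). There, \(\EE[Z^{1/2}]\le 2\,\EE[Z\wedge1]^{1/2}\) follows from Cauchy--Schwarz applied separately on \(\{Z\le1\}\) and on \(\{Z>1\}\). Your guess that \cite{BCT25} proves its free-energy bound through some fractional moment points in the right direction. As written, though, the proposal is conditional on that guess and does not identify the needed statement.

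\textbf{What is already correct.} The reduction \(\tF_p\le 2(p\wedge(1-p))\,\tF_{1/2}\) for \(p\in(0,1)\) follows correctly from convexity of \(p\mapsto\tF_p\) together with \(\tF_0=\tF_1=0\). Your two chord bounds in~(ii) are exactly the paper's.

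Your tilted-measure Jensen argument is also valid: it gives \(\EE[Z_N^p]\ge\EE[Z_N^2]^{-(1-p)}\), hence \(\tF_p\ge-(1-p)\tF_2\). This is marginally cleaner than the paper's Hölder bound through \(1+p\). It is also exactly what convexity through the points \(p<1<2\) gives directly, using \(\tF_1=0\). So your remark that that route runs ``in the wrong direction'' is mistaken, though harmless.
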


\noindent
In particular, when \(p\in (1,2]\), we have \(\tF_p(\beta) \leq c (p-1)\, \e^{-\frac{\pi}{\sigma^2(\beta)}}\), but we are not aware of any results for the Lyapunov exponent of (integer) order \(p>2\).

\smallskip

Let us end this section by giving some natural conjectures, in view of our results:
\begin{description}
       \item[Conjecture 1] There exists a constant \(c_0 <0\) such that 
       \begin{equation*}
              \tf(\beta) \sim c_0\, \e^{- \frac{\pi}{\sigma^2(\beta)}} \qquad \text{ as } \ \beta\downarrow 0\,.
       \end{equation*}

       \item[Conjecture 2] For any \(p>0\), there exists a constant \(c_p\) with \(c_p<0\) for \(p\in (0,1)\) and \(c_p>0\) for \(p>1\) such that 
       \begin{equation*}
              \tF_p(\beta) \sim c_p\, \e^{- \frac{\pi}{\sigma^2(\beta)}} \qquad \text{ as } \ \beta\downarrow 0\,.
       \end{equation*}
       When \(p>1\) is large, the constant \(c_p\) should be related to the growth of moments inside the critical window: in view of~\cite{GN25} and \cite{Raj99} we thus expect that \(c_p=\exp\big( \e^{\Theta(1) \, p}\big)\); here \(\Theta(1)\) is a quantity that remains bounded away from \(0\) and \(\infty\).
\end{description}

\subsection{Overview of the rest of the paper and of the main steps of the proof}

The rest of the paper is devoted to the proof of the main result, \textit{i.e.}\ the lower bound in Theorem~\ref{thm:fe}. 
The core of the proof lies in Section~\ref{sec:proof}, while a key covariance computation is postponed to Section~\ref{sec:covariances}.
Section~\ref{sec:Lyapunov} contains the proof of the auxiliary results Theorem~\ref{thm:second-moment} and~\ref{thm:Lyapunov}.

Our proof relies on an oriented percolation argument, introduced in \cite{Lac10a} and then refined in~\cite{AY15} in the case of dimension \(d=1\).
The strategy, illustrated in Figure~\ref{fig:oriented-percolation}, is to proceed as follows:
\begin{enumerate}[label=(\roman*)]
       \item Fix some \(N=N(\beta)\) for which the partition function has a small variance: it will thus have a probability close to \(1\) of being larger than some \(c_\star\in (0,1)\);
       \item Perform a coarse-graining of \(\N\times \Z^2\) into time-space boxes of length \(N\) and width \(\lfloor \sqrt{N}\rfloor\), and interpret this as a coarse-grained grid with oriented edges \((n,x) \rightarrow (n+1,x\pm e_i)\) with \(\{e_i\}_{i=1,2}\) the standard coordinate vectors;
       \item Declare an edge ``open'' if the polymer partition function is larger than \(c_\star\) in the corresponding box: a percolation argument shows that there is an infinite open path, with positive probability.
       \item Get a lower bound on \(Z_{mN}^{\beta,\omega}\) by restricting it to some open path, on which it is lower bounded by \((c_\star)^m\). 
\end{enumerate}
This strategy gives the bound \(\liminf_{m\to\infty} \frac{1}{mN} \log Z_{mN}^{\beta,\omega} \geq  \frac{\log c_\star}{N}\), with positive probability.
Since the limit is constant \(\PP\)-a.s.\ and is equal to the free energy, this gives the lower bound \(\tf(\beta) \geq  \frac{\log c_\star}{N}\).

\begin{figure}[tb]
       \centering
       \includegraphics[width=.8\textwidth]{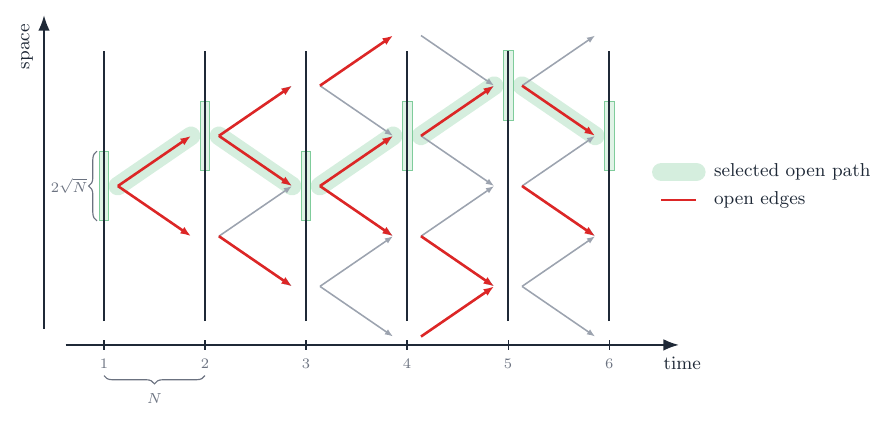}
       \vspace{-15pt}
       \caption{Illustration of the coarse-grained oriented percolation construction. 
       Open edges between boxes are marked with red arrows, while the green boxes highlight a selected open path.}
       \label{fig:oriented-percolation}
\end{figure}

Our goal is thus to use this strategy with \(N=N(\beta) = \gep\, \e^{\pi/\sigma^2(\beta)}\), with some \(\gep\) fixed small enough.
The difficulty here is that the second moment of the point-to-plane partition function~\(Z_N^{\beta,\omega}\) diverges as \(\log N\), see~\cite{CSZ19b}. 
However, the variance becomes small if the initial distribution of the random walk is ``sufficiently diffusive'' (or more precisely sufficiently regular at a diffusive scale), recalling that the regime \(N= \gep\, \e^{\pi/\sigma^2(\beta)}\) corresponds to the critical window for the Stochastic Heat Flow (see e.g.~\cite{CSZ19a} for second moment computations in this critical window).

One key property that we prove is some propagation of ``diffusive regularity'': if one starts with a sufficiently smooth distribution, the end-point distribution is also sufficiently smooth, with high probability. 
We define a ``\(\log\)-energy'' to quantify the diffusive regularity of a distribution, and we show in Section~\ref{sec:log-energy} the propagation of a bounded \(\log\)-energy property, see Proposition~\ref{prop:logregularity-propagation}.
We then embed this property in our definition of an open edge so that, along an open path, all starting and end-point distributions have a bounded \(\log\)-energy: this is detailed in Section~\ref{sec:perco}.
This allows us to conclude the proof of the lower bound of Theorem~\ref{thm:fe}, in Section~\ref{sec:conclusion}.

\section{Proof of the lower bound in Theorem~\ref{thm:fe}}
\label{sec:proof}

We assume that $d=2$ and $\beta \in (0,\beta_0/4)$ from now on.

\subsection{First notations}
\label{sec:notations}

Set \(R_N:=\sum_{n=1}^{N}\bP(S_{2n}=0)\) and note that \cite[Proposition~3.2]{CSZ19b} shows that \(R_N = \frac{1}{\pi}(\log N - \alpha +o(1))\), where \(\alpha=4\log 2+\gamma-\pi\) with \(\gamma\) the Euler--Mascheroni constant.

We fix some \(\vartheta \in \R\), which will be taken sufficiently negative, and we consider 
\begin{equation}
       \label{def:N}
       N = N(\beta) := \left\lfloor \e^{\vartheta +\alpha} \e^{\pi/\sigma^2(\beta)} \right\rfloor \,.
\end{equation}
With this notation, we get that, as \(\beta \downarrow 0\) (or equivalently \(N\to\infty\)),
\[
       \sigma^2(\beta)R_N = \frac{\sigma^2(\beta)}{\pi} \Big( \vartheta + \frac{\pi}{\sigma^2(\beta)     } + o(1) \Big)= 1+ \frac{\vartheta+o(1)}{\log N} \,.
\]
In the following, we will fix some \(\vartheta =\vartheta_0<0\) (negative enough) and we will assume that~\(\beta\) is small enough (or equivalently \(N\) large enough) so that
\begin{equation}
       \label{def:critical-window}
       \sigma^2(\beta)R_N \leq 1+ \frac{\vartheta_0+1/2}{\log N} \,.
\end{equation}
To light notation, we will also omit integer parts and write \(\sqrt{N}\) for $\lfloor \sqrt N \rfloor$. 

For \(u,z\in\Z^2\), we use the notation
\[
Z_N^{\beta}(u,z)
:= \bE_u\left[ \e^{\sum_{k=1}^{N}(\beta\omega_{k,S_k}-\lambda(\beta))} \ind_{\{S_N=z\}} \right]
\]
for the point-to-point partition function, and for some probability measure \(\mu\) on \(\Z^2\) and \(B\subset \Z^2\), we also write 
\[
Z_N^{\beta}(\mu,B) := \sum_{u\in \Z^2, z\in B} \mu(u) \, Z_N^{\beta}(u,z) \,.
\]
We naturally define similarly \(Z_N^{\beta}(u,B)\) and \(Z_N^{\beta}(\mu,z)\). (With this notation \(Z_N^{\beta,\omega} = Z_N^{\beta}(0,\Z^2)\).)
We also introduce a constrained version of the partition functions:
\[
Z_N^{\beta,\cons}(u,z) := \bE_u\left[ \e^{\sum_{k=1}^{N}(\beta\omega_{k,S_k}-\lambda(\beta))} \ind_{\{S_N=z\}}  \ind_{\{\max_{1\leq k\leq N}|S_k|_\infty < 5 \sqrt{N}\}}\right] \,,
\]
and similarly for \(Z_N^{\beta,\cons}(\mu,B)\).


Let \(e_1,e_2\) be the standard coordinate vectors and let us set
\[
\Lambda_N := (-\sqrt{N}, \sqrt{N}]^2 \cap \Z^2,\quad
\Lambda_N^a:=2a \sqrt{N}+\Lambda_N,\quad a\in\{\pm e_i\}_{i=1,2} \,.
\]
We denote by \(\mathcal{M}_1(B)\) the set of probability distributions with support in \(B\).

\begin{lemma}
       \label{lem:concentration}
       There exists a constant \(c_{\star}>0\) such that, for any $a\in\{\pm e_i\}_{i=1,2}$, for any \(N\geq 16\),
       \[
       1\geq \inf_{\mu \in \mathcal{M}_1(\Lambda_N)} \EE\big[Z_N^{\beta,\cons}(\mu,\Lambda_N^a)\big] \geq c_{\star} \,.
       \]
\end{lemma}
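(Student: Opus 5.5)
By Fubini and independence, the expectation of each weight $\e^{\beta\omega_{k,S_k}-\lambda(\beta)}$ equals $1$. The first step is therefore to write
\[
\EE\big[Z_N^{\beta,\cons}(\mu,\Lambda_N^a)\big] = \sum_{u\in\Lambda_N}\mu(u)\, \bP_u\Big(S_N\in\Lambda_N^a,\ \max_{1\leq k\leq N}|S_k|_\infty<5\sqrt N\Big).
\]
This is an average of probabilities, so it is at most $1$, which is the upper bound. For the lower bound it suffices to show that
\[
\inf_{u\in\Lambda_N}\bP_u\Big(S_N\in\Lambda_N^a,\ \max_{k\leq N}|S_k|_\infty<5\sqrt N\Big)\geq c_\star>0
\]
uniformly in $N\geq 16$ and in $a$. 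By the symmetries of the simple random walk (reflections and the swap of coordinates), we may take $a=e_1$.

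For the uniform bound, I will bound the two constraints separately:
\[
\bP_u(\cdots)\geq \bP_u\big(S_N\in\Lambda_N^{e_1}\big)-\bP_u\Big(\max_{k\leq N}|S_k|_\infty\geq 5\sqrt N\Big).
\]
This crude subtraction may be too lossy, however, since $\bP_u(S_N\in\Lambda_N^{e_1})$ is only of order a constant (roughly $0.1$) while the exit probability is small but not negligible. It is cleaner to use the invariance principle directly. Uniformly over $u/\sqrt N\in[-1,1]^2$, the rescaled walk $(S_{\lfloor tN\rfloor}/\sqrt N)_{t\in[0,1]}$ started from $u/\sqrt N$ converges to a Brownian motion with covariance $\tfrac12 I$ started from $x=u/\sqrt N$. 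The event
\[
\Big\{B_1\in(1,3)\times(-1,1),\ \sup_{t\leq 1}|B_t|_\infty<5\Big\}
\]
has probability $p(x)>0$, because the target box and the tube are open sets containing a continuous path from $x$ to the box. The map $x\mapsto p(x)$ is continuous on the compact set $[-1,1]^2$, so $\min_{x\in[-1,1]^2} p(x)>0$. Donsker's theorem then gives $\bP_u(\cdots)\geq \tfrac12\min p$ for all $N\geq N_0$, uniformly in $u$. To make the uniformity in $u$ rigorous, I will use that weak convergence is uniform over compactly many starting points, by translation and a Skorokhod coupling, together with the portmanteau theorem for the open event. The lattice issues (parity, integer parts) disappear in the limit.

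The remaining step is the finitely many $16\leq N<N_0$. For each such $N$ and each $u\in\Lambda_N$, an explicit nearest-neighbour path exists. It moves from $u$ to a point of $\Lambda_N^{e_1}$: the horizontal distance is at most $4\sqrt N$ and the box has width $2\sqrt N$, so the path fits within $N$ steps and can be padded by back-and-forth steps with parity adjusted, using that the box contains points of both parities. The path also stays inside $|\cdot|_\infty<5\sqrt N$. Such a path has probability at least $4^{-N}>0$, and taking the minimum over finitely many cases gives a positive constant.

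The main obstacle is the uniformity of the invariance principle in the starting point together with the tube constraint. If that becomes delicate, I would fall back on a direct local CLT argument: use Doob's maximal inequality to show $\bP_u(\max_k|S_k-u|_\infty\geq 4\sqrt N)\leq \varepsilon$, and split time into two halves so that the walk lands in the target box with probability bounded below.
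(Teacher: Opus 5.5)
Your proposal is correct and follows the paper's proof: the normalization $\EE[\e^{\beta\omega_{1,0}-\lambda(\beta)}]=1$ reduces the expectation to an average over $u\in\Lambda_N$ of $\bP_u(S_N\in\Lambda_N^a,\max_{k\le N}|S_k|_\infty<5\sqrt N)$, the invariance principle (uniform in the starting point) gives a lower bound for all large $N$, and the finitely many remaining $N\geq 16$ are covered because these probabilities are positive. Your compactness/portmanteau argument and your explicit nearest-neighbour path for small $N$ spell out details that the paper only summarizes as ``adjusting the constant''; the crude subtraction bound and the Doob fallback are not needed.
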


\begin{proof}    
The point-to-point partition functions are nonnegative by definition.  
Moreover, the normalization $\mathbb E[\exp(\beta\omega-\lambda(\beta))]=1$ gives, for any $a\in\{\pm e_i\}_{i=1,2}$,
$$
\mathbb{E}[Z_N^{\beta,\cons}(u,\Lambda_N^{a})] = \bP_u\Big(S_N\in \Lambda_N^{a},\,\max_{1\le k\le N}|S_k|_\infty< 5\sqrt{N}\Big) \,.
$$
By the invariance principle, we get that there is a positive constant \(c\) such that, for all \(N\) large enough, $1\geq \mathbb{E}[Z_N^{\beta,\cons}(u,\Lambda_N^{a})]\geq c$, uniformly for \(u\in \Lambda_N\) and $a\in\{\pm e_i\}_{i=1,2}$. This directly proves the lemma, adjusting the constant to \(c_{\star}\) to cover the case \(N\geq 16\) (the restriction~\(N\geq 16\) simply prevents that \(\bP_u(S_N\in \Lambda_N^a)=0\)).
\end{proof}

\subsection{The bounded \texorpdfstring{\(\log\)}{log}-energy property}
\label{sec:log-energy}

Given a non-negative kernel \(K:\Z^2 \to \R_+\) and a probability measure \(\mu\) on \(\Z^2\), one may define the \(K\)-energy of \(\mu\) as follows:
\begin{equation}
       \label{def:energy}
       \mathcal{E}_{K}(\mu) := \sum_{u,v \in \Z^2} \mu(u)K(v-u)\mu(v)\,.
\end{equation}
In the rest of the article, we consider the following log-kernel which acts at a diffusive scale:
\begin{equation}
       \label{def:K-N}
     {  K_N(x) := \Big(1+\log_+\Big(\frac{N}{1+|x|^2}\Big)\Big) \, \e^{ - \frac{|x|^2}{2N}} \,.}
\end{equation}

\begin{remark}
       The kernel that arises naturally in covariance computations is actually \(Q_N(x):= \sum_{n=1}^N \bP(S_{2n}=x)\), which verifies \(Q_N(x) \leq C\, K_N(x)\) for some constant \(C>0\), uniformly in \(N\geq 1\), \(x\in \Z^2\), see Remark~\ref{rem:compare-K-Q}.
       However, our estimates on the covariances involve \(K_N\) rather than \(Q_N\), see Lemma~\ref{lem:covariance} below --- this is mostly to allow for a simpler proof.
\end{remark}


Our next result shows that, provided that the initial distribution has a bounded \(\log\)-energy, so does the endpoint distribution, with high probability.
To state the result, we introduce, for \(\mu \in \mathcal{M}_1(\Lambda_N)\), the corresponding endpoint distribution conditioned on \(\Lambda_N^a\), \(a\in \{\pm e_i\}_{i=1,2}\): 
\begin{equation}
       \label{def:pol-measure}
       \nu_N^{\beta,a}(\mu,z) :=  \frac{Z_N^{\beta,\cons}(\mu,z)}{Z_N^{\beta,\cons}(\mu,\Lambda_N^a)} \qquad \text{ for } z\in \Lambda_N^a \,.
\end{equation}
All the estimates below are uniform in \(a\in \{\pm e_i\}_{i=1,2}\).

\begin{proposition}
\label{prop:logregularity-propagation}
Let \(c_{\star}>0\) be the constant in Lemma~\ref{lem:concentration}.
For any $a\in\{\pm e_i\}_{i=1,2}$, for every $\delta>0$, there exist $A>1$ and $\vartheta_0<0$ such that, if  $N$ satisfies~\eqref{def:critical-window} and is sufficiently large, then for any $\mu \in \mathcal{M}_1(\Lambda_N)$,
$$
\mathcal{E}_{K_N}(\mu) \leq A 
\quad \Longrightarrow \quad 
\mathbb{P}\left(
Z_N^{\beta,\cons}(\mu,\Lambda_N^a)\ge c_\star/2,\ 
\mathcal{E}_{K_N}\big(\nu_N^{\beta,a}(\mu,\cdot) \big) \leq A
\right) \ge 1-\delta.
$$
\end{proposition}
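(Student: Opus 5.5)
Write $Z := Z_N^{\beta,\cons}(\mu,\Lambda_N^a)$, and for $z \in \Lambda_N^a$ write $W(z):=Z_N^{\beta,\cons}(\mu,z)$, so that $\nu_N^{\beta,a}(\mu,z)=W(z)/Z$. The plan is to control two quantities through second moments: the total mass $Z$ and the unnormalized energy
\[
\mathcal{W}:=\sum_{z,z'\in\Lambda_N^a} W(z)K_N(z'-z)W(z').
\]
Their ratio satisfies $\mathcal{E}_{K_N}(\nu_N^{\beta,a}(\mu,\cdot))=\mathcal{W}/Z^2$. On the event $\{Z\ge c_\star/2\}$ we therefore have $\mathcal{E}_{K_N}(\nu)\le 4\mathcal{W}/c_\star^2$. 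It thus suffices to show two things:
\begin{itemize}
\item[(a)] $\PP(Z<c_\star/2)\le\delta/2$;
\item[(b)] $\PP(\mathcal{W}>Ac_\star^2/4)\le\delta/2$.
\end{itemize}

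For (a), Lemma~\ref{lem:concentration} gives $\EE[Z]\ge c_\star$, and Chebyshev reduces the claim to showing that $\Var(Z)$ is small. Expanding $\EE[Z^2]$ with the replica computation of the introduction, we have $\Var(Z)\le \sum_{u,v}\mu(u)\mu(v)\,\bigl(\bE_{u,v}^{\otimes2}[\e^{\tilde\lambda\, L_N}]-1\bigr)$. Here $\tilde\lambda=\lambda(2\beta)-2\lambda(\beta)$ and $L_N$ is the collision local time; the constraint only decreases the expression. We then expand $\e^{\tilde\lambda L_N}-1$ as a chaos series in $\sigma^2$. The first collision of two walks started at $u,v$ contributes a factor $\sigma^2\,\bP(S^{(1)}_n=S^{(2)}_n)$ summed over $n$, which is $\sigma^2 Q_N(v-u)$ up to parity. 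Each subsequent return multiplies by a renewal factor governed by $\sigma^2R_N\le 1+(\vartheta_0+1/2)/\log N$.

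This is exactly the structure of the covariance bound announced for Section~\ref{sec:covariances} (Lemma~\ref{lem:covariance}), combined with $Q_N\le CK_N$ from Remark~\ref{rem:compare-K-Q}. I expect it to give
\[
\Var(Z)\le \frac{C}{|\vartheta_0|}\,\mathcal{E}_{K_N}(\mu)\le \frac{CA}{|\vartheta_0|}.
\]
The point is that $\sum_n \sigma^{2}\bP(S_{2n}=x)$ is of order $\sigma^2K_N(x)\approx \pi K_N(x)/\log N$, and the geometric sum of returns in the subcritical window gives the factor $\log N/|\vartheta_0|$. Hence $\Var(Z)\le \delta c_\star^2/8$ once $\vartheta_0$ is negative enough depending on $A$ and $\delta$.

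For (b), we use Markov: $\EE[\mathcal{W}]=\sum_{z,z'}K_N(z'-z)\,\EE[W(z)W(z')]$. We split $\EE[W(z)W(z')]=\EE W(z)\,\EE W(z')+\Cov(W(z),W(z'))$.

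The mean part is at most $\sum_{u,v}\mu(u)\mu(v)\sum_{z,z'}p_N(z-u)p_N(z'-v)K_N(z'-z)$. This is the $K_N$-energy of $\mu$ smoothed by the heat kernel at time $N$. Since $K_N$ averaged against two independent Gaussians of variance $N$ is bounded by an absolute constant $C_0$ (the log singularity is integrable at scale $\sqrt N$), this part is $\le C_0$, independently of $A$.

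The covariance part, by the same chaos expansion as in (a), is bounded by $\frac{C}{|\vartheta_0|}\mathcal{E}_{K_N}(\mu)$ times a kernel factor $\sup_{w}\sum_{z,z'}p(z-w)p(z'-w)K_N(z'-z)$ up to logs. The worry is the diagonal $z=z'$, where $K_N\approx\log N$. This diagonal is compensated because $p_N(z-w)^2\le CN^{-1}p_N$ after the last collision at time $n$, and one gets $\sum_n (N-n)^{-1}\log(N/(N-n+1))$-type terms; these are bounded provided the covariance lemma gives the correct kernel after time-integration. Assuming this, $\EE[\mathcal{W}]\le C_0+C'A/|\vartheta_0|$.

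To conclude, we choose $A$ with $C_0\le \delta Ac_\star^2/16$ and then $\vartheta_0$ with $C'A/|\vartheta_0|\le\delta A c_\star^2/16$. Markov then gives (b), and a union bound with (a) finishes the proof.

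The main obstacle is the covariance estimate that bounds second moments of point-to-point partition functions, weighted by $K_N$, by $\mathcal{E}_{K_N}(\mu)/|\vartheta_0|$ uniformly near criticality. Specifically, it must show that the log kernel reproduces itself under the chaos expansion (a ``$K_N * $ renewal $* K_N \lesssim K_N$'' type inequality) without losing a factor $\log N$. I would first try to prove it via the Fourier/Laplace representation of the renewal with $\sigma^2R_N<1-|\vartheta_0|/(2\log N)$, following the critical-window computations in~\cite{CSZ19a}.
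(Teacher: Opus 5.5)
Your plan is the paper's proof. It uses the same split into $\{Z<c_\star/2\}$, handled by Chebyshev with $\EE[Z]\ge c_\star$ from Lemma~\ref{lem:concentration}, and $\{Z\ge c_\star/2\}$. On the latter event $\mathcal{E}_{K_N}(\nu_N^{\beta,a}(\mu,\cdot))\le 4\mathcal{W}/c_\star^2$, and Markov is applied after splitting $\EE[W(z)W(z')]$ into a mean part, bounded independently of $A$, and a covariance part of order $A/|\vartheta_0|$. The constants are chosen in the same order: $A\asymp\delta^{-1}$ first, then $\vartheta_0$.

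\textbf{The step you leave open.} The one step you do not carry out is the covariance part of $\EE[\mathcal{W}]$. You write ``assuming this'', and the heuristic you offer does not establish it: as written, $\sum_{k\le N}k^{-1}\log(N/k)\asymp\log^2 N$ is not bounded. No new ``$K_N*\text{renewal}*K_N\lesssim K_N$'' inequality is needed. Drop the constraint using $W(z)\le Z_N^{\beta}(\mu,z)$ (or the first part of Lemma~\ref{lem:covariance}). Then apply the factorized bound~\eqref{ineq:covariances} to each covariance before summing. The weight $K_N(z'-z)$ simply multiplies the endpoint factor $K_N(z'-z)$ already present in that bound, giving
\[
\sum_{z,z'\in\Lambda_N^a}K_N(z'-z)\,\Cov\bigl(W(z),W(z')\bigr)\le C\,\mathcal{V}(\e^{\vartheta_0})\,\mathcal{E}_{K_N}(\mu)\,\frac{1}{N^2}\sum_{z,z'\in\Lambda_N^a}K_N(z'-z)^2\le C\,\mathcal{V}(\e^{\vartheta_0})\,A\,H_2 .
\]
Here $H_2$ is the bounded $\log^2$-energy of the uniform measure from Remark~\ref{rem:energy-unif}. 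The diagonal $z=z'$ that worried you contributes only $O(\log^2 N/N)$ to this average, because $\log^2$ is integrable in dimension two. This is exactly how the paper concludes.

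\textbf{Why (b) needs more than (a).} Your worry is not misplaced. For (a), your geometric-series sketch, once written out, is already a complete and elementary proof. Bounding every inter-collision step by $R_N$ gives
\[
\bE^{\otimes 2}_{u,v}[\e^{\tilde\lambda L_N}]-1\le \frac{\sigma^2(\beta)\,Q_N(v-u)}{1-\sigma^2(\beta)R_N}\le \frac{\pi+o(1)}{|\vartheta_0+\frac12|}\,Q_N(v-u)
\]
under~\eqref{def:critical-window}, with no need for finer renewal estimates. The paper instead sums~\eqref{ineq:covariances} over $z,w\in\Lambda_N^a$, at the cost of the constant $H_1$. In (b), however, that crude bound would lose a factor $\log N$. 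After the last collision at time $m$, the $K_N$-weighted endpoint factor is $\sum_x p_{2(N-m)}(x)K_N(x)\approx 1+\log\frac{N}{N-m}$, and the crude bound gives no control on how close $m$ is to $N$. That control is precisely what the factor $K_N(w-z)/N^2$ in Lemma~\ref{lem:covariance} encodes, through the Dickman-type renewal estimates used in Section~\ref{sec:covariances}. Once you use the lemma as stated, the estimate closes in one line.
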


We stress that the \(\log\)-energy gives some indication of the ``regularity'' of a measure \(\mu\) on \(\Z^2\).
For instance, for any probability measure \(\mu\) on \(\Z^2\), letting \(B_R\) denote a ball of radius \(R\leq \sqrt{N}\),
\[
\sum_{|u|,|v| \leq R} \mu(u)\mu(v) K_N(v-u) \geq c\,\mu(B_R)^2 \log\Big(1+\frac{N}{1+R^2}\Big) \,,
\]
for some universal constant \(c>0\).
In particular, a probability measure \(\mu\) with bounded \(\log\)-energy cannot have a mass larger than \(C \log(1+N/(1+R^2))^{-1/2}\) on balls of radius \(R\leq \sqrt{N}\).
Thus, Proposition~\ref{prop:logregularity-propagation} provides some insight into the ``diffusive regularity'' for the polymer measure when started from a regular enough measure. 
We have kept our result quite rudimentary in order to have a simpler proof, but one could prove an analogous result for the Stochastic Heat Flow (SHF) of \cite{CSZ23}, to show that the SHF measures have a finite \(\log\)-energy --- this is the object of some upcoming work by Caravenna, Sun and Zygouras~\cite{CSZ-next}, where they obtain much stronger and more detailed results in this direction (see also the discussion in \cite[\S10.2]{CSZ24-rev}).

\begin{remark}[\(\log\)-energy of the uniform measure]
       \label{rem:energy-unif}
       Let us stress that a simple computation shows that the uniform measure on~\(\Lambda_N\) has a bounded \(\log\)-energy; in fact, it has a finite ``\(\log^2\)-energy''.
       In other words, there are some universal constants \(H_1,H_2\) such that, for all \(N\geq 1\),
       \[
        \frac{1}{N^2} \sum_{u,v \in \Lambda_N} K_N(v-u)  \leq H_1 \,,\qquad \frac{1}{N^2} \sum_{u,v \in \Lambda_N} K_N(v-u)^2  \leq H_2 \,.
       \]
\end{remark}

The proof of Proposition~\ref{prop:logregularity-propagation} will strongly rely on some second moment estimates, so let us state here a useful lemma, whose proof is postponed to Section~\ref{sec:covariances}.

\begin{lemma}
       \label{lem:covariance}
       First of all, for any \(u,z,v,w \in \Z^2\),
       $$
       \Cov\bigl(Z_N^{\beta,\cons}(u,z),Z_N^{\beta,\cons}(v,w)\bigr)
       \le \Cov\bigl(Z_N^{\beta}(u,z),Z_N^{\beta}(v,w)\bigr) \,.
       $$
       Additionally, one has a ``factorized'' upper bound: there is a universal constant \(C\) such that, for all sufficiently large \(N\) satisfying~\eqref{def:critical-window} and all $u,v,z,w\in\Z^2$
       \begin{equation}
              \label{ineq:covariances}
              \Cov\bigl(Z_N^{\beta}(u,z),Z_N^{\beta}(v,w)\bigr)
              \le \frac{C \,\mathcal{V}(\e^{\vartheta_0})}{ N^2} \, K_N(v-u) \, K_N(w-z)\,,
       \end{equation}
       where \(\mathcal{V}(t):=\int_0^\infty \frac{t^s}{\Gamma(s+1)}\,\dd s\) is the Volterra function; we stress that \(\mathcal{V}(t) \sim \frac{1}{|\log t|}\) as \(t\downarrow 0\).
\end{lemma}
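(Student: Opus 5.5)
The first inequality will come from a replica representation of the covariance, and the factorized bound \eqref{ineq:covariances} from a chaos (renewal) expansion of that representation combined with the critical-window renewal estimates of \cite{CSZ19a}.

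\emph{First inequality.} By independence of the environment, for two independent walks \(S,S'\) started from \(u,v\) we have
\[
\EE\big[Z_N^{\beta}(u,z)Z_N^{\beta}(v,w)\big]=\bE_u\otimes\bE_v\big[(1+\sigma^2(\beta))^{L_N}\ind_{\{S_N=z,S'_N=w\}}\big],
\]
where \(L_N=\sum_{n=1}^N\ind_{\{S_n=S'_n\}}\). This uses \(\EE[\e^{\beta\omega_{n,x}+\beta\omega_{n,y}-2\lambda(\beta)}]=(1+\sigma^2)^{\ind_{\{x=y\}}}\). The same identity holds for the constrained partition functions, with the extra factor \(\ind_{\{\max_{k\le N}|S_k|_\infty,\ \max_{k\le N}|S'_k|_\infty<5\sqrt N\}}\). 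Subtracting the product of the means gives
\[
\Cov\big(Z_N^{\beta,\cons}(u,z),Z_N^{\beta,\cons}(v,w)\big)=\bE_u\otimes\bE_v\big[\big((1+\sigma^2)^{L_N}-1\big)\ind_{\{S_N=z,S'_N=w\}}\ind_{\cons}\big].
\]
The integrand is nonnegative, so dropping the constraint indicator can only increase it. This yields the unconstrained covariance.

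\emph{Factorized bound.} I would expand \((1+\sigma^2)^{L_N}-1=\sum_{k\ge1}\sigma^{2k}\sum_{n_1<\dots<n_k}\prod_i\ind_{\{S_{n_i}=S'_{n_i}\}}\). I then decompose according to the first meeting \((m,x)\) and the last meeting \((n,y)\). The covariance becomes
\[
\sum_{m\le n\le N}\sum_{x,y}\sigma^2\,p_m(x-u)p_m(x-v)\,\mathcal{U}_N(n-m,y-x)\,p_{N-n}(z-y)p_{N-n}(w-y).
\]
Here \(\mathcal{U}_N(t,y)\) is the point-to-point replica renewal kernel, including the final \(\sigma^2\) (or \(\delta\) if \(m=n\)). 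I would then use three ingredients.
\begin{enumerate}[label=(\alph*)]
\item The Chapman--Kolmogorov identity \(\sum_x p_m(x-u)p_m(x-v)=p_{2m}(v-u)\).
\item The local limit bound \(p_{2m}(x)\le \frac{C}{m}\e^{-|x|^2/(2m)}\le \frac{C}{m}\e^{-|x|^2/(2N)}\), valid since each coordinate has variance \(m/2\).
\item The critical-window renewal estimate of \cite{CSZ19a}: under \eqref{def:critical-window}, \(\mathcal{U}_N(t,y)\le \frac{C}{t}\,\e^{-c|y|^2/t}\,U_N(t)\), with \(\sum_{t\le N}U_N(t)\), suitably normalized, bounded by \(C\mathcal{V}(\e^{\vartheta_0})\). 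Alternatively I would use the pointwise form \(U_N(t)\le C\frac{\log N}{N}G_{\vartheta_0}(t/N)\) and integrate \(G_\vartheta\) to obtain the Volterra function.
\end{enumerate}

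To factorize, I split according to which of the three durations \(m\), \(n-m\), \(N-n\) is at least \(N/3\).

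If \(n-m\ge N/3\), I bound \(\mathcal{U}_N(n-m,y-x)\le \frac{C}{N}U_N(n-m)\). Summing over \(x\) and \(y\) separately then gives \(\sum_m\sigma^2 p_{2m}(v-u)\le \sigma^2Q_N(v-u)\), and similarly \(Q_N(w-z)\) on the other end. The middle sum over \(U_N\) produces \(\mathcal{V}(\e^{\vartheta_0})/N\). The leftover powers of \(\log N\) from \(\sigma^2\sim\pi/\log N\) are absorbed, using \(Q_N\le CK_N\) (Remark~\ref{rem:compare-K-Q}).

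If instead \(m\ge N/3\), the first-meeting factor is already \(\frac{C}{N}\e^{-|v-u|^2/2N}\le \frac CN K_N(v-u)\). The remaining sum over the renewal and the last stretch is treated as in the previous case, with the Gaussian decay of \(\mathcal{U}_N\) combined with \(p_{N-n}\) by convolution. The case \(N-n\ge N/3\) is symmetric.

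The main obstacle is step (c): proving a point-to-point renewal bound that is uniform in the critical window and yields exactly the constant \(\mathcal{V}(\e^{\vartheta_0})\), with Gaussian spatial decay strong enough to recover \(\e^{-|w-z|^2/2N}\) after convolution. I would first try to import the Dickman-subordinator local estimates of \cite{CSZ19a} directly, bounding \(\mathcal{U}_N(t,y)\le U_N(t)\sup_y\) of the heat kernel at the typical time scale.
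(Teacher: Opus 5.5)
\textbf{Gap in the end-stretch cases $m\ge N/3$ and $N-n\ge N/3$.}
Several parts of your proposal are sound. Your replica argument for the first inequality is correct: it is the path-space version of the paper's observation that $q^{\cons}_{(0,u),(N,z)}(A)\le q_{(0,u),(N,z)}(A)$ in \eqref{formula-covariance}. Your first/last-meeting decomposition is exactly \eqref{eq:covariance-0}. Your case $n-m\ge N/3$ is the paper's term $\mathbf{III}$. There, because only two time indices are free, you should take $\sup_{t\in[1/3,1]}G_{\vartheta_0+\frac12}(t)\le C\mathcal{V}(\e^{\vartheta_0})$ rather than a sum. You also need the uniform local bound $\sigma^2(\beta)\mathbf{U}_{N,\lambda}(j,y)\le \frac{C}{Nj}G_{\vartheta_0+\frac12}(j/N)$ from \cite{CSZ19b}. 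Note that \cite{CSZ19b} is the right reference, and the parameter is $\vartheta_0+\frac12$ because of \eqref{def:critical-window}.

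In the end-stretch cases, you sum the first stretch over the meeting point $x$ by Chapman--Kolmogorov. But $x$ also appears in $\mathcal{U}_N(n-m,y-x)$, and $n-m$ may be small there. The uniform bound $\mathcal{U}_N(t,\cdot)\le \frac{C}{t}U_N(t)$ then gives nothing of order $N^{-1}$. This is why you are led to postulate $\mathcal{U}_N(t,y)\le \frac{C}{t}\e^{-c|y|^2/t}U_N(t)$, uniformly in the critical window. That estimate is neither proved in your proposal (you call it the main obstacle) nor provided by \cite{CSZ19b}. Proving it would require controlling $k$-fold convolutions with $k$ of order $\log N$ without accumulating a factor $C^k$. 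So two of your three cases are not established as written.

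\textbf{Missing idea.} No spatial information on the renewal kernel is needed. Bound the long first stretch pointwise, uniformly in $x$, using \eqref{eq:bound-SRW} and $|x-u|^2+|x-v|^2\ge \frac12|v-u|^2$:
\[
p_m(x-u)\,p_m(x-v)\le \frac{C}{m^2}\,\e^{-\frac{|v-u|^2}{2m}}\le \frac{9C}{N^2}\,\e^{-\frac{|v-u|^2}{2N}}\qquad (m\ge N/3).
\]
The prefactor here is $N^{-2}$, not the $N^{-1}$ you wrote. Now $x$ is free, so $\sum_x \mathcal{U}_N(n-m,y-x)=U_N(n-m)$ is just the total renewal mass. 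Chapman--Kolmogorov on the last stretch gives $p_{2(N-n)}(w-z)$. Summing over $m\le n$, each pair $(n-m,\,N-n)$ appears at most once, so the contribution is at most
\[
\frac{C}{N^2}\,\e^{-\frac{|v-u|^2}{2N}}\,\Big(\sigma^2(\beta)\sum_{t=0}^N U_N(t)\Big)\Big(\sum_{s=0}^N p_{2s}(w-z)\Big).
\]
This is closed by the renewal-mass bound $\sigma^2(\beta)\sum_{t\le N}U_N(t)\le C\mathcal{V}(\e^{\vartheta_0})$ (from \cite{CSZ19b}, Theorem~1.4, using $\frac12-\gamma<0$) and by $Q_N\le CK_N$. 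This is precisely how the paper treats its terms $\mathbf{I}$ and $\mathbf{II}$, and the case $N-n\ge N/3$ is symmetric.
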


\begin{remark}
       We expect that one could in fact show that 
       \[
       \Cov\bigl(Z_N^{\beta}(u,z),Z_N^{\beta}(v,w)\bigr)
              \le \frac{C \,\mathcal{V}(\e^{\vartheta_0})}{ N^2} \, Q_N(v-u) \, Q_N(w-z) \times \exp\Big(- c \,\frac{|\frac{z+w}{2} -\frac{u+v}{2}|^2}{N} \Big) \,,
       \]
       where \(Q_N(x):= \sum_{n=1}^N \bP(S_{2n}=x)\) is the ``natural'' kernel appearing in the covariance computations (recall that we have \(Q_N(x)\leq C K_N(x)\)).
       Note that the covariances should also decay if the starting ``mid-points'' \(\frac{z+w}{2}\) and \(\frac{u+v}{2}\) are far apart (at a super-diffusive scale). 
       This should follow from arguments similar but more technical than our proof, and we have chosen not to include this in Lemma~\ref{lem:covariance} for simplicity, since we do not need such a refined result.
\end{remark}

\begin{proof}[Proof of Proposition~\ref{prop:logregularity-propagation}]
Fix $\delta \in (0,1)$, and let \(A:=C_0 \delta^{-1}\) for some constant \(C_0\) large enough (that we determine explicitly, see below).
Then, we write 
\begin{equation}
       \label{eq:two-terms}
       \begin{split}
       \mathbb{P}\Big(&Z_N^{\beta,\cons}(\mu,\Lambda_N^a)< c_\star/2  \text{ or } \mathcal{E}_{K_N}\big(\nu_N^{\beta,a}(\mu,\cdot) \big) > A \Big)  \\
       &\qquad \leq \mathbb{P}\big(Z_N^{\beta,\cons}(\mu,\Lambda_N^a)< c_\star/2\big) + \mathbb{P}\big(Z_N^{\beta,\cons}(\mu,\Lambda_N^a)\geq c_\star/2 \,, \mathcal{E}_{K_N}\big(\nu_N^{\beta,a}(\mu,\cdot) \big) > A \big) \,,    
\end{split}
\end{equation}
and we estimate both terms separately.

\smallskip
\noindent
\textit{First term in~\eqref{eq:two-terms}: concentration of the partition function.}
For \(\mu\in \mathcal{M}_1(\Lambda_N)\), let us write
$$
V_N^{\beta}(\mu) := \Var\left(Z_N^{\beta,\cons}(\mu,\Lambda_N^a)\right)
= \sum_{u,v\in\Lambda_N} \sum_{z,w\in\Lambda_N^a} \mu(u)\mu(v) \, \Cov\bigl(Z_N^{\beta,\cons}(u,z),Z_N^{\beta,\cons}(v,w)\bigr) \,.
$$
Then, Lemma~\ref{lem:covariance} gives that 
\[
V_N^{\beta}(\mu) 
\le C \,\mathcal{V}(\e^{\vartheta_0}) 
\sum_{u,v\in\Lambda_N}\mu(u)\mu(v)K_N(v-u)
\frac{1}{ N^2}  \sum_{z,w\in\Lambda_N^a}K_N(w-z) \le
C \,\mathcal{V}(\e^{\vartheta_0})\, \mathcal{E}_{K_N}(\mu) \,H_1 \,,
\]
with \(H_1\) the constant from Remark~\ref{rem:energy-unif}.
Then, since \(\mathcal{E}_{K_N}(\mu) \leq A\) and \(\mathcal{V}(t) \to 0\) as \(t\downarrow0\), we can choose \(\vartheta_0 = \vartheta(A,\delta)<0\) sufficiently negative so that \(V_N^{\beta}(\mu) \leq  c_{\star}^2 \delta /8\).

Now, recalling that \(\mathbb{E}[Z_N^{\beta,\cons}(\mu,\Lambda_N^a)] \geq c_{\star}\) by Lemma~\ref{lem:concentration}, we get by Chebyshev's inequality that the first term in \eqref{eq:two-terms} is 
\[
       \mathbb{P}\left(Z_N^{\beta,\cons}(\mu,\Lambda_N^a)<c_\star/2\right) \leq \frac{4}{c_\star^2}\, V_N^{\beta}(\mu) \le \frac{\delta}{2} \,,
\]
uniformly for \(\mu\in \mathcal{M}_1(\Lambda_N)\) with \(\mathcal{E}_{K_N}(\mu) \leq A\).

\smallskip
\noindent
\textit{Second term in~\eqref{eq:two-terms}: \(\log\)-energy of the polymer measure.}
Recall the definition~\eqref{def:pol-measure}, so that 
\[
\mathcal{E}_{K_N}\big(\nu_N^{\beta,a}(\mu,\cdot) \big) =  \frac{1}{Z_N^{\beta,\cons}(\mu,\Lambda_N^a)^2} \sum_{z,w\in \Lambda_N^a} Z_N^{\beta,\cons}(\mu,z)Z_N^{\beta,\cons}(\mu,w) \, K_N(w-z) \,.
\]
Then, on the event $\{Z_N^{\beta,\cons}(\mu,\Lambda_N^a)\ge c_\star/2\}$, we have that \(\mathcal{E}_{K_N}(\nu)\le 4 \mathcal R/c_\star^2 \) with 
\[
\mathcal R := \sum_{z,w\in \Lambda_N^a} Z_N^{\beta}(\mu,z)Z_N^{\beta}(\mu,w) \, K_N(w-z) \,,
\]
where we have further removed the constraint in the partition function to obtain an upper bound.
We thus have that the second term in~\eqref{eq:two-terms} is bounded by 
\[
       \mathbb{P}\big( \mathcal R > A c_\star^2/4\big) \leq \frac{4}{c_{\star}^2 A}\, \EE[\mathcal R] \,, 
\]
by Markov's inequality.
Now, let us note that
$$
\mathbb{E}[Z_N^{\beta}(\mu,z)Z_N^{\beta}(\mu,w)]
= \mathbb{E}[Z_N^{\beta}(\mu,z)] \mathbb{E}[Z_N^{\beta}(\mu,w)] +\Cov(Z_N^{\beta}(\mu,z),Z_N^{\beta}(\mu,w)) \,.
$$
Then, noting that \(\mathbb{E}[Z_N^{\beta}(\mu,z)] = \bP_{\mu}(S_N =z) \leq \frac{C}{N}\) by the local-CLT (see e.g.\ \cite[Prop.~2.4.4]{LL10}) and using also Lemma~\ref{lem:covariance} to control the covariances, we get that 
\[     
\EE[\mathcal R] \leq \frac{C^2}{N^2}  \sum_{z,w\in \Lambda_N^a} K_N(w-z) + C \mathcal{V}(\e^{\vartheta_0}) \sum_{u,v \in \Lambda_N} \mu(u)\mu(v) K_N(v-u)  \frac{1}{N^2} \sum_{z,w\in \Lambda_N^a} K_N(w-z)^2 \,.
\]
Recalling that \(\mathcal{E}_{K_N}(\mu)\leq A\) and Remark~\ref{rem:energy-unif} for the definition of the constants \(H_1,H_2\), we get
\[
\mathbb{P}\big( \mathcal R > A c_\star^2/4\big) \leq  \frac{4}{c_{\star}^2 A} \Big(C^2 H_1 + C \mathcal{V}(\e^{\vartheta_0}) A H_2 \Big) \leq \frac{4 C^2 H_1}{ c_{\star}^2} A^{-1} + \frac{4 C H_2 }{ c_{\star}^2}\, \mathcal{V}(\e^{\vartheta_0}) \,.
\]
Therefore, if we have set \(A = C_0 \delta^{-1}\) with \(C_0 = 16 C^2 H_1/c_{\star}^2\) and if \(\vartheta_0\) is negative enough so that \(\mathcal{V}(\e^{\vartheta_0})  \leq c_{\star}^2\delta/(16 C H_2)\), we conclude that the second term in~\eqref{eq:two-terms} is bounded by~\(\delta/2\), again uniformly for \(\mu\in \mathcal{M}_1(\Lambda_N)\) with \(\mathcal{E}_{K_N}(\mu) \leq A\).
This concludes the proof.
\end{proof}

\subsection{Implementing the percolation argument}
\label{sec:perco}

For $x\in\mathbb Z^2$, we define the space-shifted box
$$
\Lambda_N^{x}:=2x\sqrt N+\Lambda_N \,.
$$
We consider oriented edges of the form \(e = ((k,x),(k+1,y))\) with \(|x-y|_1=1\).
We will define a percolation process on edges iteratively in time \(k\geq 1\).
We start at time \(k=1\) and \(x=0\) for simplicity (the reason will be clear in Section~\ref{sec:conclusion} below), and let us introduce a notation for the set of edges that we consider at time~\(k\geq 1\):
\[
E_k := \big\{ e= ((k,x),(k+1,y)), \, |x|_1\leq k-1, |x-y|_1 =1\big\} \,,\qquad E =\bigcup_{k\geq 1} E_k \,.
\]
We fix \(\delta\in (0,1)\) and choose \(A \geq 1\), \(\vartheta_0<0\), and \(N\) in~\eqref{def:critical-window} so that Proposition~\ref{prop:logregularity-propagation} holds  and so that \(A\) is larger than the \(\log\)-energy of the uniform measures on \(\Lambda_N^y\).

The goal is to define probability measures \((\mu_{k,x})_{k\geq 1, |x|_1< k}\) on \(\Lambda_N^x\) such that the \(\mu_{k,x}\) are \(\mathcal{F}_{kN}\)-measurable, with \(\mathcal{F}_n = \sigma\{ \omega_{i,z}, i\leq n, z \in \Z^2\}\), and such that \(\mathcal{E}_{K_N}(\mu_{k,x}) \leq A\).
We will also construct sets \((V_k^*)_{k\geq 0}\) of ``connected'' vertices at time \(k\), \textit{i.e.}\ vertices that are connected to \((1,0)\) by (at least) one oriented path of open edges.

For \(u\in \Lambda_N^x\) and \(0\le a<b\), we use the time-shifted notation:
\[
Z_{(a,b]}^{\beta,\cons}(u,z)
:= \bE_u\Big[ \e^{\sum_{j=1}^{b-a}(\beta\omega_{a+j,S_j}-\lambda(\beta))} \ind_{\{S_{b-a}=z\}} \ind_{\{ \max_{1\leq j \leq b-a} |S_j - 2x \sqrt{N} |_\infty < 5 \sqrt{N}\}} \Big] \,,
\]
and similarly for \(Z_{(a,b]}^{\beta,\cons}(\mu,B)\) for probability measures and target sets, in analogy with the notation of Section~\ref{sec:notations}.
Then, for oriented edges \(e=((k,x),(k+1,y)) \in E_k\) and any probability measure \(\mu \in \mathcal{M}_1(\Lambda_N^x)\), we define 
\[
\nu_{N}^{\beta,e}(\mu,z)
:=\frac{Z_{(kN,(k+1)N]}^{\beta,\cons}(\mu,z)}{Z_{(kN,(k+1)N]}^{\beta,\cons}(\mu,\Lambda_N^y)} \, ,
\qquad z\in\Lambda_N^y \,,
\]
in analogy with~\eqref{def:pol-measure}.
Then, we define the event that the oriented edge \(e=((k,x),(k+1,y)) \in E_k\) is \emph{open} as follows: 
\begin{equation}
       \label{def:open}
       \{e \text{ is open}\} = \Big\{
       Z_{(kN,(k+1)N]}^{\beta,\cons}(\mu_{k,x},\Lambda_N^{y})\geq c_\star/2
       \ \text{ and }\ 
       \mathcal{E}_{K_N}(\nu_N^{\beta,e}(\mu_{k,x},\cdot)) \leq A \Big\} \,.  
\end{equation}

To construct iteratively the measures \(\mu_{k,x}\), we start with an initial \(\mu_{1,0} \in \mathcal{M}_1(\Lambda_N^0)\) with \(\mathcal{E}_{K_N}(\mu_{1,0}) \leq A\).
(Some specific choice for \(\mu_{1,0}\) is made in Section~\ref{sec:conclusion} below.)
We also select the first vertex in the set of connected vertices at time \(k=1\), \textit{i.e.}\ we let \(V_1^*=\{0\}\).

Then, assume that we have defined at time \(k\) some $\mathcal F_{kN}$-measurable probability measures $\mu_{k,x}$ with \(\mathcal{E}_{K_N}(\mu_{k,x})\leq A\) and the set \(V_k^*\) of connected vertices.
Using the definition of an open edge, we now define the set of connected vertices at time \(k+1\) as follows:
\[
V_{k+1}^* = \big\{ y \in \Z^2 \,, \exists x \in V_k^* \text{ such that $e = ((k,x),(k+1,y))$ is open} \big\} \,.
\]
As far as the measures \(\mu_{k+1,y}\) are concerned, we define them as follows:
\begin{itemize}
       \item If \(y \notin V_{k+1}^*\), we let \(\mu_{k+1,y} = U_{\Lambda_N^y}\), where \(U_{\Lambda_N^y}(z) = \frac{1}{|\Lambda_N^y|} \ind_{\Lambda_N^y}(z)\) is the uniform measure on \(\Lambda_{N}^y\) (in fact, any measure with \(\log\)-energy bounded by \(A\) would be fine);
       \item If \(y \in V_{k+1}^*\), we select with a deterministic rule some \(x\in V_k^*\) such that $e = ((k,x),(k+1,y))$ is open (say the smallest \(x\) for the lexicographic order), and we set \(\mu_{k+1,y}(\cdot) = \nu_N^{\beta,e}(\mu_{k,x},\cdot)\).
\end{itemize}
Note that \(\mu_{k+1,y}\) is constructed in an \(\mathcal{F}_{(k+1)N}\)-measurable way.

We have thus constructed a percolation process on oriented edges \(e \in E\).
The next lemma shows that if \(\delta\) has been fixed sufficiently small, then there is an infinite oriented open path starting from \((1,0)\) with positive probability.

\begin{lemma}
       \label{lem:percolation}
       If \(\delta>0\) is fixed small enough and \(A,\vartheta_0\) are chosen such that the conclusion of Proposition~\ref{prop:logregularity-propagation} holds, then
       $$
       \mathbb{P}\big((1,0)\longrightarrow \infty
       \text{ by open oriented edges}\big)\ge \frac12 \, ,
       $$
       where $\{(1,0)\longrightarrow \infty \text{ by open oriented edges}\}$ denotes the event that there exists an infinite oriented open path starting from $(1,0)$.
\end{lemma}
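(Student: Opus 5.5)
The plan is to compare the constructed edge process with a high-density oriented percolation model on $\mathbb{Z}^2$ (time $k$, space $x\in\Z^2$, edges $(k,x)\to(k+1,x\pm e_i)$), and then run a Peierls/contour-type argument for oriented percolation in which every edge is open with conditional probability at least $1-\delta$ given the past, with a finite range of dependence within a time layer.

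\textbf{Conditional probability of each edge.} Fix an edge $e=((k,x),(k+1,y))\in E_k$. By construction, $\mu_{k,x}$ is $\mathcal{F}_{kN}$-measurable and satisfies $\mathcal{E}_{K_N}(\mu_{k,x})\le A$. The partition function $Z^{\beta,\cons}_{(kN,(k+1)N]}(\mu,\cdot)$ depends only on $\omega$ in the slab of times $(kN,(k+1)N]$, and this slab is independent of $\mathcal{F}_{kN}$. By translation invariance of $\omega$ in space and time, Proposition~\ref{prop:logregularity-propagation} (with $a=y-x$) applies to any fixed $\mu$ in the set $\{\mu\in\mathcal{M}_1(\Lambda_N^x):\mathcal{E}_{K_N}(\mu)\le A\}$. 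Conditioning on $\mathcal{F}_{kN}$ then gives $\PP(e\text{ open}\mid\mathcal{F}_{kN})\ge 1-\delta$ almost surely.

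\textbf{Dependence within a layer.} Because of the constraint $|S_j-2x\sqrt N|_\infty<5\sqrt N$, the event $\{e\text{ open}\}$ depends, given $\mathcal{F}_{kN}$, only on $\omega$ in the slab restricted to the spatial box $2x\sqrt N+(-5\sqrt N,5\sqrt N)^2$. Edges from vertices $x,x'$ with $|x-x'|_\infty\ge 5$ therefore use disjoint environments. Hence, conditionally on $\mathcal{F}_{kN}$, the edge variables at layer $k$ form a finite-range dependent family with range independent of $\delta$. By Liggett--Schonmann--Stacey, this family stochastically dominates i.i.d.\ Bernoulli$(p(\delta))$ edges, with $p(\delta)\to1$ as $\delta\to0$. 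Doing this layer by layer, with conditional domination given the past, I would couple the whole edge process so that it dominates a Bernoulli$(p)$ oriented bond percolation on the graph $E$.

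An alternative avoids LSS. A fixed set of $m$ edges from different layers, or from well-separated vertices within a layer, is simultaneously closed with probability at most $\delta^{m/c}$, by chaining the conditional bounds. Feeding this into a contour count gives the same conclusion.

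\textbf{Percolation estimate.} Embed $\Z^2$-space and restrict to the sub-lattice $\{(k,x): x=(j,0)\}$ with edges $x\to x\pm e_1$. This reduces the problem to standard $1+1$ oriented bond percolation. For Bernoulli oriented bond percolation with $p$ close to $1$, the contour (Peierls) argument of Durrett gives $\PP(\text{origin}\to\infty)\ge 1-C\sum_{m}(3^{2}\,(1-p)^{1/2})^{m}$, or a similar bound. This is at least $1/2$ once $1-p$ is small enough, which holds once $\delta$ is small. Since the coupling dominates, the same bound holds for our process started from $(1,0)$. Here $V_1^*=\{0\}$ matches the starting vertex, and the restriction $|x|_1\le k-1$ in $E_k$ contains the whole forward cone.

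\textbf{Main obstacle.} The delicate step is justifying the domination rigorously despite the adaptive construction. The measures $\mu_{k+1,y}$ depend on which edges opened, so openness of later edges depends on earlier outcomes through the $\mu$'s. The point to check carefully is that the bound $\PP(\cdot\mid\mathcal{F}_{kN})\ge1-\delta$ holds regardless of those outcomes; this is exactly where uniformity in $\mu$ in Proposition~\ref{prop:logregularity-propagation} is used. It must be combined with the finite-range dependence within a layer, and I expect the contour count with blocks of separated edges to be the cleanest route.
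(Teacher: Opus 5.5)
Your proposal is correct and follows essentially the same route as the paper. The paper also derives $\PP(e\text{ open}\mid\mathcal{F}_{kN})\ge 1-\delta$ from the uniformity of Proposition~\ref{prop:logregularity-propagation} over all $\mu$ with $\mathcal{E}_{K_N}(\mu)\le A$. It then uses finite-range dependence within each layer coming from the spatial constraint, applies Liggett--Schonmann--Stacey domination layer by layer to obtain i.i.d.\ Bernoulli$(p)$ oriented bond percolation, and concludes from $\theta(p)\to 1$ as $p\to 1$. Your explicit reduction to the $1+1$ sub-lattice with a Peierls bound, and your alternative chaining argument that avoids LSS, only spell out in more detail the paper's citation of Durrett's result.
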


\begin{remark}
       \label{rem:infinite-path}
       Let us stress that, by construction, if there exists some infinite path of open oriented edges, one can select some infinite sequence \(e_k = ((k,x_k),(k+1,x_{k+1}))\) of open edges such that \(x_k \in V_k^*\) and \(\mu_{k+1,x_{k+1}}(\cdot) = \nu_N^{\beta,e_k}(\mu_{k,x_k},\cdot)\) for all \(k\geq 1\).
       Indeed, the finite open paths compatible with the deterministic predecessor rule form a locally finite infinite tree, and K\"onig's lemma gives an infinite branch.
\end{remark}

\begin{proof}
       First of all, notice that by construction, all \((\mu_{k,x})_{k\geq 1, |x|_1<k}\) have a \(\log\)-energy bounded by~\(A\).       Thus, applying Proposition~\ref{prop:logregularity-propagation}, we get that
       \[
       \mathbb P\left(e\text{ is open} \mid \mathcal F_{kN}\right) \ge 1-\delta
       \qquad\text{for all } e\in E_k.
       \]
       Note also that thanks to the constraint in the partition function in the definition~\eqref{def:open} of an open edge, conditionally on \(\mathcal F_{kN}\), the indicators \((\ind_{\{e \text{ is open}\}})_{e\in E_k}\) have only finite-range space dependence \(r=10\).  
       Indeed, a constrained path starting from~\(\Lambda_N^x\) remains contained in \(2x\sqrt N+(-5\sqrt N,5\sqrt N]^2\), so the partition functions associated to two edges \(e=((k,x),(k+1,y))\), \(e=((k,x'),(k+1,y'))\) are independent as soon as \(|x-x'| >10\).
       Thus, by the Liggett--Schonmann--Stacey domination theorem~\cite{LSS97}, conditionally on \(\mathcal{F}_{kN}\), the indicators \((\ind_{\{e \text{ is open}\}})_{e\in E_k}\) stochastically dominate independent Bernoulli variables \((\eta_e)_{e\in E_k}\) with parameter \(p:=1-\delta_{\mathrm{LSS}}\), where the parameter \(\delta_{\mathrm{LSS}}\) can be made arbitrarily small by choosing \(\delta\) small.

       We thus have a conditional coupling such that \(\eta_e\le 1_{\{e\text{ is open}\}}\) for all \(e\in E_k\).
       Applying these conditional coupling kernels successively, with independent auxiliary randomness at each time~\(k\), we obtain a joint coupling such that \(\eta_e\le 1_{\{e\text{ is open}\}}\) for all \(e\in E\), where the \((\eta_e)_{e\in E}\) are i.i.d.\ Bernoulli variables of parameter \(p\).
       We thus have that the probability that there is an infinite oriented open path starting from \((1,0)\) is bounded from below by \(\theta(p)\), where \(\theta(p)\) is the survival probability for i.i.d.\ \(\mathrm{Bern}(p)\) oriented bond percolation.
       Since \(\theta(p) \uparrow 1\) as \(p\uparrow 1\), see~\cite{Durrett84}, this concludes the proof, provided that \(\delta\) has been fixed small enough so that \(p=1-\delta_{\mathrm{LSS}}\) is close enough to \(1\).
\end{proof}

\subsection{Conclusion of the proof}
\label{sec:conclusion}

{Let us define \(\tilde\Lambda_N^0 :=\{ x \in \Lambda_N^0, |x|_1+N \text{ is even}\}\).
We let the initial probability measure \(\mu_{1,0}\) be the uniform measure on \(\tilde\Lambda_N^0\), and we choose \(A_0>1\) large enough so that \(\mathcal{E}_{K_N}(\mu_{1,0})\leq A_0\). We fix \(\delta \in (0,1)\), \(A>A_0,\vartheta_0<0\) such that Proposition~\ref{prop:logregularity-propagation} and Lemma~\ref{lem:percolation} hold.
Recall that we have defined \(N = N(\beta) := \lfloor \e^{\vartheta_0 +\alpha} \e^{\pi/\sigma^2(\beta)}\rfloor\) in~\eqref{def:N}, and we take $\beta>0$ small enough so that \(N\) satisfies~\eqref{def:critical-window} and sufficiently large.}

Then, since $Z_N^{\beta}(0,z)>0$ a.s.\ for every $z\in\tilde\Lambda_{N}^{0}$ and since \(\tilde\Lambda_{N}^{0}\) is finite, there exists some $\eta_N\in(0,1)$ (whose specific value is immaterial) such that, setting
$$
\mathcal A_1:= \left\{Z_N^{\beta}(0,z)\ge \eta_N\mu_{1,0}(z)\text{ for all }
z\in\tilde\Lambda_N^{0}\right\} \,,
$$
we have \(\mathbb P(\mathcal A_1)\ge  \frac12\). Let also 
\[
\mathcal A_2:=\{(1,0)\longrightarrow \infty \text{ by open oriented edges}\} \,,
\]
with initial measure \(\mu_{1,0}\) as above.
Then, Lemma~\ref{lem:percolation} shows that $\mathbb P(\mathcal A_2 )\ge 1/2$. 
Moreover, since $\mathcal A_1 \in \mathcal{F}_N$ while $\mathcal A_2$ is independent of \(\mathcal{F}_N\), we get that $\mathbb P(\mathcal A_1\cap\mathcal A_2)\ge 1/4$.

\smallskip
Let us now work on the event $\mathcal A_1\cap\mathcal A_2$, and consider an infinite path of open oriented edges \(e_k = ((k,x_k),(k+1,x_{k+1}))\), with \(x_1=0\), and associated measures \((\mu_{k,x_k})_{k\geq 1}\) verifying \(\mu_{k+1,x_{k+1}}(\cdot) = \nu_N^{\beta,e_k}(\mu_{k,x_k},\cdot)\) for all \(k\geq 1\) (recall Remark~\ref{rem:infinite-path}).
Then, by restricting the path to follow the boxes \((\Lambda_N^{x_k})_{k\geq 1}\), we get by the Markov property that, for any \(m\geq 1\),
\begin{align*}
 Z_{mN}^{\beta}
&\ge \sum_{z_1\in \Lambda_N^{x_1},z_2\in\Lambda_N^{x_2},\ldots,z_m\in\Lambda_N^{x_m}}
Z_{(0,N]}^{\beta}(0,z_1)
\prod_{k=1}^{m-1} Z_{(kN,(k+1)N]}^{\beta,\cons}(z_k,z_{k+1})\\
&\ge \eta_N
\sum_{z_2\in\Lambda_N^{x_2},\ldots,z_m\in\Lambda_N^{x_m}}
Z_{(N,2N]}^{\beta,\cons}(\mu_{1,0},z_2)
\prod_{k=2}^{m-1} Z_{(kN,(k+1)N]}^{\beta,\cons}(z_k,z_{k+1}).
\end{align*}
where we have used the event \(\mathcal{A}_1\) for the first time interval.

Now, for every open edge $e_k=((k,x_k),(k+1,x_{k+1}))$ on this path and every $z\in\Lambda_N^{x_{k+1}}$, we have that
$$
Z_{(kN,(k+1)N]}^{\beta,\cons}(\mu_{k,x_k},z)
=Z_{(kN,(k+1)N]}^{\beta,\cons}(\mu_{k,x_k},\Lambda_N^{x_{k+1}}) \, \mu_{k+1,x_{k+1}}(z)
\ge \frac{c_\star}{2}\, \mu_{k+1,x_{k+1}}(z) \,,
$$
recalling also that \(Z_{(kN,(k+1)N]}^{\beta,\cons}(\mu_{k,x_k},\Lambda_N^{x_{k+1}}) \geq c_{\star}/2\) on an open edge, see~\eqref{def:open}.
Applying this identity successively yields that \(Z_{mN}^{\beta} \ge \eta_N (c_{\star}/2)^{m-1}\) on the event $\mathcal A_1\cap \mathcal A_2$.
In particular, on this event we get that 
$$
\liminf_{m\to\infty} \frac{1}{mN} \log Z_{mN}^{\beta} \geq \liminf_{m\to\infty} \frac{1}{m} \frac{\log \eta_N}{ N} + \liminf_{m\to\infty} \frac{m-1}{m} \frac{1}{N}\log (c_{\star}/2) =\frac{1}{N}\log (c_{\star}/2) \,.
$$
Since $\tf(\beta) = \lim_{m\to\infty} \frac{1}{mN} \log Z_{mN}^{\beta}$ is deterministic and $\mathbb P(\mathcal A_1\cap \mathcal A_2)\geq 1/4$, the almost sure limit satisfies
$$
\tf(\beta) \geq -\frac{\log(2/c_\star)}{N}.
$$
Recalling the definition~\eqref{def:N} of $N$ (with \(\vartheta =\vartheta_0\) so that Proposition~\ref{prop:logregularity-propagation} and Lemma~\ref{lem:percolation}), this gives, for all sufficiently small $\beta$,
\[
\tf(\beta)\ge -C_{\vartheta_0} \,\e^{-\pi/\sigma^2(\beta)},
\qquad
C_{\vartheta_0}:=2\e^{-(\vartheta_0+\alpha)}\log(2/c_\star).
\]
The lower bound in Theorem~\ref{thm:fe} then holds for all \(\beta\in (0,\beta_0/4)\) simply by enlarging the constant (recall that \(\tf\) and \(\sigma^2\) are continuous).
\qed

\section{Covariance estimates: proof of Lemma~\ref{lem:covariance}}
\label{sec:covariances}

\subsection{Preliminaries: chaos expansion formulas for covariances}

We recall here the classical chaos expansion of the partition function.
For \(n\in \N,x\in \Z^d\), let \(\xi_{n,x} := \e^{\beta \omega_{n,x}-\lambda(\beta)}-1\), so that \(\EE[\xi_{n,x}]=0\) and \(\EE[\xi_{n,x}^2] = \sigma^2(\beta) = \e^{\lambda(2\beta)-2\lambda(\beta)}-1\). Thus, one can write 
\[
Z_N^{\beta}(u,z) = \e^{\beta \omega_{N,z} -\lambda(\beta)} \bE_u\Big[ \prod_{n=1}^{N-1}  (1+\xi_{n,S_n}) \ind_{\{S_{N}=z\}} \Big]  =: \e^{\beta \omega_{N,z} -\lambda(\beta)} \tilde Z_N^{\beta}(u,z)\,.
\]
Note that we have, for \(z\neq w\),
\begin{equation}
       \label{eq:Z-tilde-Z}
       \Cov(Z_N^{\beta}(u,z), Z_N^{\beta}(v,w)) =  \Cov(\tilde Z_N^{\beta}(u,z), \tilde Z_N^{\beta}(v,w))\,,
\end{equation}
while for \(z=w\)
\begin{equation}
       \label{eq:Z-tilde-Z-2}
       \begin{split}
            \Cov(Z_N^{\beta}(u,z), Z_N^{\beta}(v,w))  =  \e^{\lambda(2\beta) -2\lambda(\beta)} &\Cov(\tilde Z_N^{\beta}(u,z), \tilde Z_N^{\beta}(v,w)) \\
            & \quad + \sigma^2(\beta) \bP_u(S_N=z) \bP_v(S_N=w)\,.  
       \end{split}
\end{equation}
Since \(1+\sigma^2(\beta)=\e^{\lambda(2\beta)-2\lambda(\beta)}\) is bounded uniformly for \(\beta\in(0,\beta_0/4)\), this prefactor will be absorbed into the constants below.

One can then expand the product in \(\tilde Z_N^{\beta}(u,z)\): one can write 
\[
\tilde Z_N^{\beta}(u,z) = \sum_{k=0}^{\infty}  \sum_{0 < n_1< \cdots < n_{k}< N } \sum_{z_1,\ldots, z_{k}\in \Z^2} \bP_u \big( S_{n_i}=z_i \text{ for all } 1\leq i\leq k+1 \big)\prod_{i=1}^k \xi_{n_i,z_i} \,,
\]
with the convention \((n_{k+1},z_{k+1}) = (N,z)\), and \(\prod_{i=1}^k \xi_{n_i,z_i} =1\) if \(k=0\).

We can in fact write this chaos expansion in a more compact way.
Let us denote by \(\mathcal{I}_{N}\) the class of subsets \(A \subset \{1,\ldots, N-1\} \times \Z^2\) with different time indices, so that \(A\) can be written as \(A =\{(n_1,z_1), \ldots, (n_k,z_k)\}\) with \(0<n_1<\cdots <n_k < N\). 
Then, for such \(A \in \mathcal{I}_{N}\), let us denote by 
\[
q_{(0,u),(N,z)}(A) := \bP_u\big( S_{n_i}=z_i \text{ for all } 1\leq i\leq k, \, S_N=z \big)\,, \qquad \xi(A) := \prod_{(n_i,z_i)\in A} \xi_{n_i,z_i}\,.
\]
Thus, we may rewrite the chaos decomposition above as follows:
\begin{equation}
       \label{def:chaos}
       \tilde Z_N^{\beta}(u,z) = \sum_{A \in  \mathcal{I}_{N}} q_{(0,u),(N,z)}(A) \, \xi(A) \,.  
\end{equation}
This expression is very useful, in particular once one notices that the variables \((\xi(A))_{A\in \mathcal{I}_{N}}\) are orthogonal in \(L^2(\PP)\), allowing us to use~\eqref{def:chaos} to compute covariances.

In particular, since \(\EE[\xi(A)^2] = \sigma^2(\beta)^{|A|}\) by definition of \(\sigma^2(\beta)\), one gets that 
\begin{equation}
       \label{formula-covariance}
       \Cov(\tilde Z_N^{\beta}(u,z), \tilde Z_N^{\beta}(v,w)) 
       = \sum_{A \in  \mathcal{I}_{N}, A\neq \emptyset} \sigma^2(\beta)^{|A|} \, q_{(0,u),(N,z)}(A)  q_{(0,v),(N,w)}(A) \, .
\end{equation}
Note that a similar decomposition as~\eqref{def:chaos} also holds for the constrained partition function: for \(u\in \Lambda_N^x\), one simply needs to replace \(q_{(0,u),(N,z)}(A)\) by
\[
q_{(0,u),(N,z)}^{\mathrm{cons}}(A) = \bP_u \big( S_{n_i}=z_i \text{ for all } 1\leq i\leq k+1 , \max_{1\leq j \leq N} |S_j|_\infty < 5 \sqrt{N}\big) \,.
\]
The same covariance computation as before holds, and since \( q_{(0,u),(N,z)}^{\mathrm{cons}}(A) \leq  q_{(0,u),(N,z)}(A)\), one clearly has that
\[
\Cov(Z_N^{\beta,\mathrm{cons}}(u,z), Z_N^{\beta,\mathrm{cons}}(v,w))  \leq \Cov(Z_N^{\beta}(u,z), Z_N^{\beta}(v,w)) \,,
\]
which is the first part of Lemma~\ref{lem:covariance}.
We can therefore focus on the unconstrained case.

\subsection{Proof of~\texorpdfstring{\eqref{ineq:covariances}}{}}

Let us first argue that we may focus on \(\Cov(\tilde Z_N^{\beta}(u,z), \tilde Z_N^{\beta}(v,w))\).
In view of~\eqref{eq:Z-tilde-Z}-\eqref{eq:Z-tilde-Z-2}, we only need to bound the second term in~\eqref{eq:Z-tilde-Z-2}.

Let \(p_n(x) = \bP(S_n=x)\) be the random walk transition kernel, and let us state the following uniform bound, valid for the simple random walk in dimension \(d=2\): there exists a constant \(C\) such that for any \(n\geq 1\) and \(x\in \Z^2\),
\begin{equation}
       \label{eq:bound-SRW}
       p_n(x) \leq \frac{C}{n} \,\e^{-\frac{|x|^2}{n}} \,.
\end{equation}
This follows, for instance, from the corresponding one-dimensional estimate after rotating the lattice coordinates --- for the one-dimensional simple random walk, it can be deduced from Stirling's formula, after having expressed the probability with binomial coefficients.
With this bound, we get that for \(w=z\),
\[
       \bP_u(S_N=z) \bP_v(S_N=w) \leq  \frac{C^2}{N^2}  \e^{ - \frac{|z-u|^2 + |z-v|^2}{N}} \leq   \frac{C^2}{N^2}  \e^{ - \frac{|v-u|^2}{2N}} \,,
\]
since \(|z-u|^2+|z-v|^2 \geq \frac12 |v-u|^2\) for all \(z\in \R^2\) (the minimum is reached at \(z= \frac{u+v}{2}\)).
This shows that the second term in~\eqref{eq:Z-tilde-Z-2} is bounded by a constant times \(\frac{\sigma^2(\beta)}{N^2} K_N(v-u) K_N(0)\), recalling the definition~\eqref{def:K-N} of \(K_N(x)\). { For $\beta$ sufficiently small so that $\sigma^2(\beta)<\mathcal V(e^{\vartheta_0})$, we conclude that this contribution is bounded by a constant times $\frac{\mathcal V(e^{\vartheta_0})}{N^2}K_N(v-u)K_N(w-z)$, since $w=z$.}

\begin{remark}
       \label{rem:compare-K-Q}
Let us note that, thanks to~\eqref{eq:bound-SRW}, we have that for any \(N\in \N\) and \(x\in \Z^2\)
\begin{equation*}
       Q_N(x) := \sum_{n=1}^{N} p_{2n}(x) \leq  2C  \sum_{n=1}^N n^{-1} \e^{- \frac{|x|^2}{2n}} \leq C'\Big(1+\log_+\Big(\frac{N}{1+|x|^2} \Big) \Big)\, \e^{- \frac{|x|^2}{2N}} =  C' K_N(x) \,,
\end{equation*}
where we have used that the main contribution of the sum comes from \(n\) between \(|x|^2\) and \(N\) whenever \(|x|^2\leq N\). 
\end{remark}

Let us now focus on bounding \(\Cov(\tilde Z_N^{\beta}(u,z), \tilde Z_N^{\beta}(v,w))\).
We start from the formula~\eqref{formula-covariance}.
If \(A \in \mathcal{I}_N\) is non-empty and \(A =\{(n_1,z_1), \ldots, (n_k,z_k)\}\) with \(0<n_1<\cdots <n_k < N\), let us denote by \(\mathrm{first}(A) = (n_1,z_1)\) and \(\mathrm{last}(A) = (n_k,z_k)\).
Note also that, using the Markov property,
\[
q_{(0,u),(N,z)}(A) = p_{n_1}(z_1-u) \hat{q}(A) p_{N-n_k}(z-z_k) \quad \text{ with } \quad \hat q(A) = \prod_{i=2}^{k} p_{n_i-n_{i-1}}(z_i-z_{i-1}) \,.
\]
Therefore, decomposing~\eqref{formula-covariance} over the first and the last points of~\(A\), we obtain that the covariances \(\Cov(\tilde Z_N^{\beta}(u,z), \tilde Z_N^{\beta}(v,w))\) are equal to
\begin{equation}
       \label{eq:covariance-0}
       \sum_{y,y' \in \Z^2} \sum_{1\leq n \leq m \leq N-1} 
       p_n(y-u) p_{n}(y-v) U_{n,m}^{\beta} (y,y') p_{N-m}(z-y') p_{N-m}(w-y') \,,     
\end{equation}
where we have introduced
\[
U_{n,m}^{\beta} (y,y'):= 
\sum_{ \mathrm{first}(A) = (n,y) ,\, \mathrm{last}(A) = (m,y') } \sigma^2(\beta)^{|A|} \, \hat q(A)^2\,.
\]
Here the case \(m=n\) is understood as the singleton case, so that \(U_{n,n}^{\beta}(y,y')=\sigma^2(\beta)\ind_{\{y=y'\}}\); we use this convention in what follows.

Let us also note that for \(n<m\) we can rewrite \(U_{n,m}^{\beta} (y,y')\) as follows: by translation invariance, with the convention \(n_0=0,z_0=0\), 
\[
\begin{split}
       U_{n,m}^{\beta} (y,y')
       &  =  \sigma^2(\beta) \sum_{k=1}^{\infty}\sigma^2(\beta)^k \sum_{1\leq n_1<\cdots < n_k = m-n} \sum_{\substack{z_1,\ldots, z_k \in \Z^2\\ z_k=y'-y}} \prod_{i=1}^k p_{n_i-n_{i-1}}(z_i-z_{i-1})^2 \\
       & =\sigma^2(\beta) \sum_{k=1}^{\infty} (\sigma^2(\beta) R_N)^k \sum_{1\leq n_1<\cdots < n_k = m-n} \sum_{\substack{z_1,\ldots, z_k \in \Z^2\\ z_k=y'-y}} \prod_{i=1}^k \frac{p_{n_i-n_{i-1}}(z_i-z_{i-1})^2}{R_N} \,,
\end{split}
\]
where we recall that \(R_N= \sum_{n=1}^N p_{2n}(0) = \sum_{n=1}^{N} \sum_{x\in \Z^2} p_n(x)^2\).
In particular, we obtain that
\begin{equation}
       \label{eq:renewal-density}
       U_{n,m}^{\beta} (y,y') = \sigma^2(\beta) \mathbf{U}_{N,\lambda} (m-n,y'-y) \,,
\end{equation}
where \(\lambda = \sigma^2(\beta) R_N\) and \(\mathbf{U}_{N,\lambda} (n,z)\) is an exponentially weighted renewal density, introduced in~\cite[Eq.~(3.21)]{CSZ19b}. (This formula also holds when \(n=m\), in which case \(\mathbf{U}_{N,\lambda} (0,y'-y)= \ind_{\{y=y'\}}\).)

We now split the sum~\eqref{eq:covariance-0} into three parts:
\[
 \text{\eqref{eq:covariance-0}} \leq \mathbf{I} + \mathbf{II} + \mathbf{III} \,,
\]
where the term \(\mathbf{I}\) corresponds to a sum restricted to \(n\geq N/3\), \(\mathbf{II}\) to a sum restricted to \(N-m\geq N/3\), and \(\mathbf{III}\) to a sum restricted to \(m-n\geq N/3\).


\smallskip
\textit{Term \(\mathbf{III}\). }
For this term, let us simply state Theorem~2.3 from~\cite{CSZ19b} (see in particular Equation~(2.3)), which directly gives that there is a universal constant \(C\) such that, for any \(1\leq j \leq N\) and any \(y\in \Z^2\), if \(N\) verifies~\eqref{def:critical-window} and sufficiently large,
\[
\sigma^2(\beta) \mathbf{U}_{N,\lambda} (j,y) \leq \frac{C}{N} \, \frac{1}{j} G_{\vartheta_0+\frac12}\Big(\frac{j}{N}\Big) \,,
\]
where \(G_{\vartheta_0+\frac12}(t)\) is the weighted renewal density of the Dickman subordinator.
For \(t\in (0,1]\), we have the formula \(G_{\vartheta_0+\frac12}(t) = \int_0^{\infty} \frac{\e^{(\vartheta_0+\frac12-\gamma)s} s t^{s-1}}{\Gamma(s+1)} \dd s\), with \(\gamma\) the Euler--Mascheroni constant.
In particular, in the term \(\mathbf{III}\), the sum is bounded by 
\[
\begin{split}
\mathbf{III} \leq \frac{C}{N^2} \sup_{t \in [\frac13,1]} G_{\vartheta_0+\frac12}(t) \sum_{y,y'\in \Z^2}  \sum_{1\leq n\leq m\leq N-1} & p_n(y-u)p_n(y-v) p_{N-m}(z-y') p_{N-m}(w-y')\\
& \leq \frac{C' \mathcal{V}(\e^{\vartheta_0})}{N^2}  \sum_{1\leq n \leq m \leq N-1} p_{2n}(v-u) p_{2(N-m)}(w-z) \,,
\end{split}
\]
where we have used the Chapman--Kolmogorov property (and the symmetry of the random walk) to get that \(\sum_{y\in \Z^2} p_n(y-u)p_n(y-v)  = p_{2n}(v-u)\) and similarly for the sum over~\(y'\).
We have also used that \(G_{\vartheta_0+\frac12}(t) \leq C\int_0^{\infty} \frac{\e^{\vartheta_0 s}}{\Gamma(s+1)}\dd s = C\mathcal{V}(\e^{\vartheta_0})\), since \(e^{(\frac12-\gamma) s}s t^{s-1}\) is bounded uniformly for \( t \in [\frac13,1]\) and \(s>0\) (note that \(\frac12-\gamma<0\)).
Enlarging the sums, we end up with\
\[
\mathbf{III} \leq \frac{C' \mathcal{V}(\e^{\vartheta_0})}{N^2}  \Big( \sum_{n=1}^{N} p_{2n}(v-u) \Big) \Big( \sum_{n=1}^{N} p_{2n}(w-z) \Big) \,,
\]
which gives the result thanks to Remark~\ref{rem:compare-K-Q}.

\smallskip
\textit{Terms \(\mathbf{I}\) and \(\mathbf{II}\). }
The two terms are symmetric, so let us treat only the term \(\mathbf{I}\). 
We may again use~\eqref{eq:bound-SRW}, to get that 
we get that
\[
p_n(y-u)p_n(y-v) \leq  \frac{C^2}{n^2}\, \e^{- \frac{|y-u|^2+|y-v|^2}{n}} \leq  \frac{C^2}{n^2}\, \e^{- \frac{|v-u|^2}{2 n}} \,.
\]
This gives that
\[
\begin{split}
   \mathbf{I} \leq  \sum_{n=N/3 }^{N-1} \sum_{m=n}^{N-1} \frac{C^2}{n^2}\, \e^{- \frac{|v-u|^2}{2 n}} & \sum_{y,y'\in \Z^2}  \sigma^2(\beta) \mathbf{U}_{N,\lambda} (m-n,y'-y) p_{N-m}(z-y')p_{N-m}(w-y') \\  
   & \quad \leq \frac{C'}{N^2} \e^{- \frac{|v-u|^2}{2 N}} \sum_{n=N/3 }^{N-1} \sum_{m=n}^{N-1} \sigma^2(\beta)\, U_{N,\lambda}(m-n) p_{2(N-m)}(z-w) \,,
\end{split}
\]
where we have summed over \(y\), then over \(y'\), with \(U_{N,\lambda}(m-n) = \sum_{z\in \Z^2} \mathbf{U}_{N,\lambda} (m-n,z)\).
Now, enlarging the sum, we get that
\[
       \mathbf{I} \leq \frac{C'}{N^2} \e^{- \frac{|v-u|^2}{2 N}} \Big( \sigma^2(\beta)\sum_{n=0}^N U_{N,\lambda}(n)\Big) \Big(  \sum_{n=1}^{N} p_{2n}(z-w)\Big)\,.
\]
Then, notice that \cite[Theorem~1.4]{CSZ19b} (in particular (1.19)) gives that, for \(N\) verifying~\eqref{def:critical-window} and sufficiently large,
\[
\sigma^2(\beta)\sum_{n=0}^N U_{N,\lambda}(n) \leq \sigma^2(\beta)  + \frac{C}{N} \sum_{n=1}^N G_{\vartheta_0+\frac12}\Big(\frac{n}{N}\Big) \leq  C' \int_0^1 G_{\vartheta_0+\frac12}(t)\dd t \,,
\]
with \(\int_0^1 G_{\vartheta_0+\frac12}(t)\dd t = \int_{0}^{\infty} \frac{\e^{(\vartheta_0+\frac12-\gamma)s}}{\Gamma(s+1)} \dd s = \mathcal{V}(\e^{\vartheta_0+\frac12-\gamma})\).
Since, \(\frac12-\gamma\leq 0\), this gives that
\[
 \mathbf{I} \leq \frac{C'\mathcal{V}(\e^{\vartheta_0}) }{N^2} \e^{- \frac{|v-u|^2}{2 N}} \Big( \sum_{n=1}^{N} p_{2n}(z-w)\Big) \leq   \frac{C''\mathcal{V}(\e^{\vartheta_0}) }{N^2}  K_N(v-u)K_N(z-w)\,,
\]
using again Remark~\ref{rem:compare-K-Q} for the last inequality. This concludes the proof.
\qed

\section{Proof of Theorems~\ref{thm:second-moment} and \ref{thm:Lyapunov}}

\label{sec:Lyapunov}

Let us first quickly prove the existence of Lyapunov exponents of order \(p\).
We only treat the case \(p\geq 1\), the case \(p<1\) being similar.
The proof simply relies on the sub-multiplicativity of \(\EE[(Z_{N}^{\beta})^p]\), using the Markov property of the partition function, see \cite[Eq.~(2.3)]{Com17}:
for \(N,M\in \N\), it can be written as follows
\[
       Z_{N+M}^{\beta,\omega} =  Z_{N}^{\beta,\omega} \times \bE_N^{\beta,\omega}\big[ Z_{(N,N+M]}^{\beta,\omega}(S_N,\Z^2) \big]\,,
\]
where we recall the notation \(Z_{(N,N+M]}^{\beta,\omega}(x,\Z^2)\) for the point-to-plane partition function with starting point \((N,x)\), between times \(N\) and \(N+M\). 
Since \(p\geq 1\), we obtain by Jensen's inequality that 
\(
       (Z_{N+M}^{\beta,\omega})^p \leq  (Z_{N}^{\beta,\omega})^p \times \bE_N^{\beta,\omega}\big[ Z_{(N,N+M]}^{\beta,\omega}(S_N,\Z^2)^p \big] .
\)
Then, taking the conditional expectation with respect to \(\mathcal{F}_N\), and using that \(Z_{(N,N+M]}^{\beta,\omega}(S_N,\Z^2)\) is independent of \(\mathcal{F}_N\) and has the same distribution as \(Z_{M}^{\beta,\omega}\) by translation invariance, we get that 
\[
\EE\big[(Z_{N+M}^{\beta,\omega})^p \mid \mathcal{F}_N \big] \leq  (Z_{N}^{\beta,\omega})^p \times \EE\big[ (Z_{M}^{\beta,\omega})^p \big] \,.
\]
Taking the expectation leads to the sub-multiplicativity of \(\EE[(Z_{N}^{\beta,\omega})^p]\), or equivalently the sub-additivity of its logarithm:
\(
       \log \EE\big[(Z_{N+M}^{\beta,\omega})^p \big] \leq \log \EE\big[(Z_{N}^{\beta,\omega})^p \big] + \log \EE\big[(Z_{M}^{\beta,\omega})^p \big]  .
\)
By Fekete's lemma, we conclude that, for any \(p\geq 1\) and $\beta \in (0,\beta_0/p)$,
\begin{equation}
       \label{eq:lyapunov-inf}
       \tF_p(\beta) = \lim_{N\to\infty} \frac1N \log \EE[(Z_{N}^{\beta,\omega})^p] = \inf_{N\in \N} \frac1N \log \EE[(Z_{N}^{\beta,\omega})^p]\,.
\end{equation}
A similar statement holds for \(p\in (0,1)\) with the infimum replaced by a supremum, since \((\EE[(Z_{N}^{\beta,\omega})^p])_{N\geq 1}\) is then super-multiplicative, because Jensen's inequality is reversed.

\subsection{Proof of Theorem~\ref{thm:second-moment}}

The proof follows from an explicit formula for the Lyapunov exponent of the second moment. 
Recall that, using the replica trick, we have that 
\[
\EE\big[ (Z_N^{\beta,\omega})^2\big] = \bE \Big[ \e^{ (\lambda(2\beta)-2\lambda(\beta)) \sum_{n=1}^N \ind_{\{S_{2n}=0\}}}\Big] \,,
\]
which is the partition function of a homogeneous pinning model.
Thus, the free energy can be implicitly expressed as follows, see \cite[\S2.1]{Gia07} for a reference.
Denoting \(\tau_1 := \min\{n\geq 1: S_{2n}=0\}\), for any \(\beta\in (0,\beta_0/2)\), the Lyapunov exponent \(\tF_2(\beta)\) is the unique positive solution of
\[
\sum_{n=1}^{\infty} \e^{- \tF_2(\beta) n} \bP(\tau_1=n) = \e^{-(\lambda(2\beta)-2\lambda(\beta))} \,.
\]
Introduce \(\hat U(f) := \sum_{n=0}^{\infty} \e^{-f n} \bP(S_{2n}=0)\) and notice that \(\hat U(f) =1+ \hat U(f)\sum_{n=1}^{\infty} \e^{- f n} \bP(\tau_1=n)\), thanks to the renewal equation \(\bP(S_{2n}=0) = \ind_{\{n=0\}} + \sum_{k=1}^n \bP(\tau_1 =k) \bP(S_{2(n-k)}=0)\).
Thus, the above is equivalent to 
\begin{equation}
       \label{eq:formula-F2}
       \hat U(\tF_2(\beta)) = \sum_{n=0}^{\infty} \e^{- \tF_2(\beta) n} \bP(S_{2n}=0) = \frac{1}{1-\e^{-(\lambda(2\beta)-2\lambda(\beta))}} = \frac{1+\sigma^2(\beta)}{ \sigma^2(\beta)} \,,
\end{equation}
recalling also that \(\sigma^2(\beta) =\e^{\lambda(2\beta)-2\lambda(\beta)}-1\).

Now, since \(\bP(S_{2n}=0) = 16^{-n} \binom{2n}{n}^2\), it turns out that \(\hat U(f)\) is (a multiple of) the complete elliptic function of the first kind (which is a special case of the hypergeometric function).
More precisely, \cite[Eq.~\href{https://dlmf.nist.gov/19.2.E8}{(19.2.8)} and Eq.~\href{https://dlmf.nist.gov/19.5.E1}{(19.5.1)}]{DLMF} give the identity
\[
\hat U(f) = \frac{2}{\pi} \int_0^{\pi/2} \frac{\dd t}{ \sqrt{1- \e^{-f} \sin^2(t)}} \,.
\]
Notice that the behavior of the integral near its singularity is known, see~\cite[Eq.~\href{https://dlmf.nist.gov/19.12.E1}{(19.12.1)}]{DLMF}: one gets that as \(f\downarrow 0\)
\[
\begin{split}
\pi \hat U(f) & = \log\Big(\frac{16}{1-\e^{-f}}\Big) + \frac14 (1-\e^{-f}) \Big( \log\Big(\frac{16}{1-\e^{-f}}\Big) -2\Big) + O\Big( (1-\e^{-f})^2 \log\Big(\frac{16}{1-\e^{-f}}\Big) \Big) \\
& = \log\Big(\frac{16}{f}\Big) + \frac{f}{4} \log\Big(\frac{16}{f}\Big) + O\Big( f^2 \log \frac{1}{f} \Big) \,.
\end{split}
\]
Using this expression with \(f = \tF_2(\beta)\) in~\eqref{eq:formula-F2} and recalling that we introduced \(\alpha(\beta) := \pi \frac{1+\sigma^2(\beta)}{\sigma^2(\beta)}\), since \(\tF_2(\beta)\downarrow 0\) as \(\beta\downarrow 0\), we get
\[
\alpha(\beta) = \pi \hat U(\tF_2(\beta)) = \log\Big(\frac{16}{\tF_2(\beta)}\Big) + \frac{\tF_2(\beta)}{4} \log\Big(\frac{16}{\tF_2(\beta)}\Big) + O\Big( \tF_2(\beta)^2 \log \frac{1}{\tF_2(\beta)} \Big)\,.
\]
Inverting this relation, we end up with 
\[
\frac{1}{\tF_2(\beta)} = \frac{1}{16}\, \e^{\alpha(\beta)} - \frac{1}{4}\, \alpha(\beta) +O\big(\alpha(\beta)^2\, \e^{-\alpha(\beta)}\big) \qquad \text{ as } \beta\downarrow 0 \,,
\]
which is exactly what is claimed in Theorem~\ref{thm:second-moment}.
From this, we then easily deduce that \(\tF_2(\beta) \sim 16 \, \e^{-\alpha(\beta)}  = 16\, \e^{-\pi}\, \e^{-\frac{\pi}{\sigma^2(\beta)}}\) as \(\beta\downarrow 0\).
\qed

\begin{remark}
       Let us stress that one could push the expansion of \(\e^{-\alpha(\beta)}/\tF_2(\beta)\) to any arbitrary power of \(\alpha(\beta) \e^{-\alpha(\beta)}\).
       We have however decided to present this expansion only up to the second order, for simplicity of exposition.
\end{remark}

\subsection{Proof of Theorem~\ref{thm:Lyapunov}}
\label{sec:moments}

Before we start the proof, let us stress that \(p\mapsto \tF_p(\beta)\) is a convex function of \(p\), as a limit of convex functions.
In fact, all bounds will follow from convexity arguments, combined with Theorems~\ref{thm:fe} and~\ref{thm:second-moment} or existing results.

\smallskip
\noindent
\textit{Item \ref{i-p-01}, \(p\in (0,1)\).}
For the upper bound, we use that \(\EE[Z^p] \leq 2^p \EE[ Z\wedge 1]^{p\wedge (1-p)}\) for any non-negative random variable \(Z\) with \(\EE[Z]=1\), see \cite[Lem.~3.3]{BCT25}.
We thus have that 
\[
\tF_p(\beta) = \lim_{N\to\infty} \frac1N \log \EE[(Z_{N}^{\beta,\omega})^p]  \leq p\wedge(1-p) \limsup_{N\to\infty} \frac1N \log \EE[Z_{N}^{\beta,\omega}\wedge 1]\,.
\]
Then, by \cite[Theorem~2.2]{BCT25}, there is a constant \(c_2>0\) such that, for all \(\beta\in (0,\beta_0/4)\), 
\[
\sup_{N\geq 1} \frac1N \log \EE[Z_{N}^{\beta,\omega}\wedge 1] \leq - c_2\, \e^{- \frac{\pi}{\sigma^2(\beta)}} \,.
\]
This concludes the upper bound in~\ref{i-p-01}.

For the lower bound, we use two inequalities. 
First, we have \(\EE[Z^p] \geq \e^{p \EE[\log Z]}\) by Jensen's inequality, so that 
\[
\tF_p(\beta) = \lim_{N\to\infty} \frac1N \log \EE[(Z_{N}^{\beta,\omega})^p]  \geq p \lim_{N\to\infty} \frac1N \EE\big[\log Z_{N}^{\beta,\omega}\big] = p\, \tf(\beta)  \geq - c\, p\, \e^{- \frac{\pi}{\sigma^2(\beta)}}\,,
\]
the last inequality following from Theorem~\ref{thm:fe}.
Second, by Hölder's inequality, for any \(p\in (0,1)\) we have that \(1= \EE[Z^{p^2} Z^{1-p^2}] \leq \EE[Z^p]^{p} \EE[Z^{1+p}]^{1-p}\).
Thus, we get that 
\[
\tF_p(\beta) \geq  - \frac{1-p}{p}\, \tF_{1+p}(\beta) \geq - \frac{1+p}{2p} (1-p) \tF_{2}(\beta) \geq  - \frac{c (1+p)}{2p} (1-p)  \, \e^{- \frac{\pi}{\sigma^2(\beta)}}\,,
\]
where for the second inequality we have used that \(\EE[Z^{1+p}] \leq \EE[Z^2]^{\frac{1+p}{2}}\), and then applied Theorem~\ref{thm:second-moment} for the last one.
Combining the two inequalities (the first one for \(p\) close to \(0\), the second one for \(p\) close to \(1\)), we get the lower bound in~\ref{i-p-01}.

\smallskip
\noindent
\textit{Item \ref{ii-p>1}, \(p>1\).}
By convexity of \(p\mapsto \tF_p(\beta)\), we have that 
\[
 -2(p-1) \tF_{1/2}(\beta) \leq \tF_p(\beta) \leq (p-1) \tF_{\lceil p\rceil}(\beta) \,.
\]
The left-hand side uses that the slope at \(p=1\) is lower bounded by \(2(\tF_1(\beta) -\tF_{1/2}(\beta)) = -2 \tF_{1/2}(\beta)\). 
       The right-hand side uses that \(\tF_p(\beta)\) lies below the chord between \(\tF_1(\beta)\) and \(\tF_{\lceil p \rceil}(\beta)\).

We can thus directly use item~\ref{i-p-01} for the lower bound.
The upper bound is  \((p-1) \tF_{\lceil p\rceil}(\beta)\) if \(p\in (1,2]\) and we can thus apply Theorem~\ref{thm:second-moment}.
\qed

\subsection*{Acknowledgements}
This work has been initiated during our stay at IMS in Singapore during the program ``Statistical Mechanics and Singular SPDEs'' (4--20 May 2026).
We are very grateful to the organizers and to the IMS for their support.
We would also like to thank in particular Francesco Caravenna and Nicola Turchi for several insightful discussions on this topic (in particular regarding, but not limited to, Theorem~\ref{thm:second-moment}), Sébastien Martineau for comments on percolation, as well as Stefan Junk, Hubert Lacoin, Rongfeng Sun, and Nikos Zygouras.

Q.B.\ acknowledges the support of Institut Universitaire de France and ANR Local (ANR-22-CE40-0012-02).
S.N.\ acknowledges support from JSPS KAKENHI Grant Numbers JP24K16937 and JP25K00911.

\bibliographystyle{abbrv}
\bibliography{biblio}

\end{document}